\numberwithin{equation}{section}
\theoremstyle{plain}
 \newtheorem{thm}{Theorem}[section]
 \newtheorem{prop}{Proposition}[section]
\theoremstyle{definition}
 \newtheorem{dfn}{Definition}[section]
\theoremstyle{remark}
 \newtheorem{rem}{Remark}[section]
 \numberwithin{equation}{section}
\DeclarePairedDelimiterX{\expectarg}[1]{[}{]}{%
  \ifnum\currentgrouptype=16 \else\begingroup\fi
  \activatebar#1
  \ifnum\currentgrouptype=16 \else\endgroup\fi
}
\newcommand{\innermid}{\nonscript\;\delimsize\vert\nonscript\;}
\newcommand{\activatebar}{%
  \begingroup\lccode`\~=`\|
  \lowercase{\endgroup\let~}\innermid 
  \mathcode`|=\string"8000
}
\title[Data Assimilation for 3D NSE]{Continuous Data Assimilation for the Three Dimensional Navier-Stokes Equations }
\newcommand{\comments}[1]{}
\newcommand{\R}{\mathbb R}
\newcommand{\N}{\mathbb N}
\renewcommand{\frak}{\mathfrak}
\newcommand{\nn}{\nonumber}
\newcommand{\D}{\displaystyle }
\newcommand{\dt}{{\D\frac{d}{dt}}}
\newcommand{\ra}{\rightarrow}
\newcommand{\lra}{\longrightarrow}
\newcommand{\be}{\begin{equation}}
\newcommand{\ee}{\end{equation}}
\newcommand{\bes}{\begin{equation*}}
\newcommand{\ees}{\end{equation*}}
\newcommand{\bea}{\begin{eqnarray}}
\newcommand{\eea}{\end{eqnarray}}
\newcommand{\beas}{\begin{eqnarray*}}
\newcommand{\eeas}{\end{eqnarray*}}
\newcommand{\p}{\partial}
\def\mbb#1{\mathbb{#1}}
\def\cal#1{\mathcal{#1}}
\def\lf{\left} 
\def\rg{\right} 
\def\<{\langle} 
\def\>{\rangle}
\def\Om{\Omega}
\def\at#1{\Bigr|_{#1}}
\def\P{\mbb{P}_\sigma}
\def\tw{\widetilde{w}}
\def\co{{\cal O}}
\def\intav#1{\mathchoice
          {\mathop{\vrule width 6pt height 3 pt depth -2.5pt
                  \kern -9pt \intop}\nolimits_{\kern -6pt#1}}%
          {\mathop{\vrule width 5pt height 3 pt depth -2.6pt
                  \kern -6pt \intop}\nolimits_{#1}}%
          {\mathop{\vrule width 5pt height 3 pt depth -2.6pt
                  \kern -6pt \intop}\nolimits_{#1}}%
          {\mathop{\vrule width 5pt height 3 pt depth -2.6pt
                  \kern -6pt \intop}\nolimits_{#1}}}
\newcommand{\charfn}[1]{{\raisebox{1.2pt}{\mbox{$\chi
_{\kern-1pt\lower3pt\hbox{{$\scriptstyle{#1}$}}}$}}}}
\author[A. Biswas]{\bfseries
Animikh Biswas$^\dagger$}  
\author[R. Price]{\bfseries
Randy Price}
\address{ (Animikh Biswas) 
Department of Mathematics and Statistics\\ 
University of Maryland Baltimore County  \\ 
Baltimore, MD 21250\\
USA}
\email{abiswas@umbc.edu}
\address{ (Randy Price) 
Department of Mathematics and Statistics\\ 
University of Maryland Baltimore County  \\ 
Baltimore, MD 21250\\
USA}
\email{randyp1@umbc.edu}
\thanks{$^\dagger$ Corresponding author. Email: abiswas@umbc.edu}
\subjclass[2010]{Primary 35Q30; 93C20 Secondary 35Q35; 76B75}
 \keywords{Three dimensional Navier-Stokes Equations, Continuous Data Assimilation, Determining Modes, Determining Volume Elements and Nodes, Signal Synchronization}
\begin{document}
\comments{
{\begin{flushleft}
		\baselineskip9pt\scriptsize
MANUSCRIPT
\end{flushleft}}
}
\vspace{18mm} \setcounter{page}{1} \thispagestyle{empty}

\begin{abstract}
In this paper, we provide conditions, \emph{based solely on the observed data},  for the global well-posedness, regularity and convergence of the Azouni-Olson-Titi data assimilation algorithm (AOT algorithm) for  a Leray-Hopf weak solutions  of the three dimensional Navier-Stokes equations (3D NSE). 
The aforementioned conditions on the observations, which in this case comprise either of \emph{modal} or \emph{volume element observations}, are automatically satisfied for solutions that are globally regular and are uniformly bounded in the $H^1$-norm. However, neither regularity nor uniqueness is necessary for the efficacy of the AOT algorithm. To the best of our knowledge, this is the first such rigorous analysis of the AOT  data assimilation algorithm for the  3D NSE.
\end{abstract}

\maketitle

\section{Introduction}  

For a given dynamical system, which is believed to accurately describe some aspect(s) of an underlying physical reality, 
the problem of forecasting is often hindered by  inadequate knowledge of the initial state and/or model parameters describing the system. However, in many cases, this is compensated by the fact that one has access to data from (possibly noisy) measurements of the system, collected  on a {\it much coarser spatial grid than the desired resolution of the forecast.} The objective of data assimilation and signal synchronization is to use this coarse scale observational measurements to fine tune  our knowledge of the state and/or model to improve the accuracy of the forecasts \cite{Daley1991, Kalnay2003}.

Due to its ubiquity in scientific applications, data assimilation has been the subject of a very large body of work.  Classically, these techniques are based on linear quadratic estimation, also known as the Kalman Filter.  The Kalman Filter has the drawback of assuming that the underlying system and any corresponding observation models are linear.  It also assumes that measurement noise is Gaussian distributed.  This has been mitigated by practitioners via modifications, such as the Ensemble Kalman Filter, Extended Kalman Filter and the Unscented Kalman Filter and consequently, there has been a recent surge of interest in developing a rigorous mathematical framework for these approaches; see, for instance, \cite{asch,  HMbook2012,  Kalnay2003, KLS, LSZbook2015, ReichCotterbook2015} and the references therein. These works provide a Bayesian and variational framework for the problem, with emphasis on analyzing variational and Kalman filter based methods. It should be noted however that the problems of stability, accuracy and {\it catastrophic filter divergence}, particularly for {\it infinite dimensional chaotic dynamical systems governed by PDE's,} continue to pose serious challenges to rigorous analysis, and are far from being resolved \cite{HMbook2012, BLSZ2013, BLLMCSS2013, tmk2016-1, tmk2016-2}.

An alternative approach to data assimilation, henceforth referred to as the \emph{AOT algorithm,}
has recently been  proposed in \cite{AzouaniTiti2014, AzouaniOlsonTiti2014}, 
which employs a feedback control paradigm
via a \emph{Newtonian relaxation scheme} (\emph{nudging}). 
This was in turn predicated on the notion of finite determining functionals (modes, nodes, volume elements)
for dissipative systems, the rigorous  existence of which was first
established in  \cite{FoiasProdi1967, FoiasTemam1984, FoTi91, JT92}. Assuming that the observations are generated from a continuous dynamical system given by
\[
\dt u = F(u), u(0)=u_0,
\]
the AOT algorithm entails solving an associated system
\be \label{aotsystem}
\dt w =F(w)-\mu (I_hw-I_hu), w(0)=w_0\ (\mbox{arbitrary}),
\ee
where $I_h$ is a finite rank linear operator acting on the phase space, called \emph{interpolant operator}, constructed \emph{solely from observations} on $u$ (e.g. low (Fourier) modes of $u$ or values of $u$ measured in a coarse spatial grid). Here $h$ refers to the size of the spatial grid or, in case of the \emph{modal interpolant}, the reciprocal of $h$ stands for the number of observed modes. Moreover,  $\mu>0$ is the \emph{relaxation/nudging parameter} an appropriate choice of which needs to be made for the algorithm to work. It can then be established that the AOT system
\eqref{aotsystem} is well-posed and its solution \emph{tracks} the solution of the original system asymptotically, i.e. $\|w-u\| \lra 0$ as $t \ra \infty$ in a suitable norm.

\comments{
This  was motivated by earlier work, mainly in the context of  finite-dimensional dynamical systems governed by ordinary differential equations  \cite{Anthes74, hoke, Nijmeijer2001,Thau1973}.  
Notable features of this  approach in \cite{AzouaniOlsonTiti2014} 
includes (i) intuitive conceptual framework based on the idea of finite determining functionals/degrees of freedom and inertial form/manifold applicable to a wide class of dissipative systems \cite{FoiasProdi1967, FoiasTemam1984}
(ii) rigorous analysis guaranteeing global stability (i.e. for $t \in (0,\infty)$) and accuracy (i.e. no catastrophic divergence) (iii) applicability to a wide range of observables (e.g. nodal, volume elements and finite elements), and not just to Fourier modes (iv) availability of explicit conditions on the relaxation (nudging) parameter and the spatial resolution of the observations in order to guarantee convergence of the algorithm to the reference solution of the model equation(s). 
}

Although initially introduced in the context of the two-dimensional Navier-Stokes equations, this was later generalized to include various other models and convergence in stronger norms (e.g. the analytic Gevrey class)  \cite{ANT,BiswasMartinez2017,
FGMW, FarhatJollyTiti2015, FLT, FLT1, FLT2, MarkowichTitiTrabelsi2016, pei}, as well as to more general situations such as discrete in time and  
error-contaminated measurements and to statistical solutions  \cite{BessaihOlsonTiti2015, BFMT, FoiasMondainiTiti2016}. This method has  been shown to perform remarkably well in numerical simulations \cite{AltafTitiKnioZhaoMcCabeHoteit2015, FJJT, GeshoOlsonTiti2015, HOT, HJ2018, larios2019} and has recently been successfully implemented for the first time for efficient dynamical downscaling of a global atmospheric reanalysis \cite{desam2019}. Recent applications include its implementation in \emph{reduced order modeling (ROM)} of turbulent flows to mitigate inaccuracies in ROM \cite{Rebholz2019}, and in
inferring flow parameters and turbulence configurations \cite{DiLeoni1, CHL}.

In this paper, we consider the well-posedness, stability and convergence/tracking property of solutions of the AOT system for the three dimensional Navier-Stokes equations (3D NSE).
Although numerical simulation demonstrating the efficacy of the AOT algorithm for the 3D NSE has recently been demonstrated in \cite{DiLeoni2}, to the best of our knowledge, this is the first such rigorous analytical result for the 3D NSE. In all the cases mentioned before where rigorous analysis is available, including the Navier-Stokes-$\alpha$ models \cite{AB, ANT, FLT2}, one crucially uses the fact that these models are well-posed and regular, i.e. uniform-in-time bound in a higher Sobolev norm (e.g. the $H^1$-norm) is available. These bounds are  used in providing an upper bound on the spatial resolution $h$ of the observations (or lower bound on the number of observed low modes)
necessary for the algorithm to be well-posed, stable and convergent. Additionally, the value of the nudging parameter $\mu$ guaranteeing convergence/tracking property 
also explicitly depends on this uniform bound. 

However, our results here are in the context of (a special class of) Leray-Hopf weak solutions of the 3D NSE for which neither regularity nor uniqueness is known. In fact, due to the recent work of 
\cite{BV} weak solutions of the 3D NSE (but not Leray-Hopf weak solutions), are non-unique. Regardless, we show that the AOT system is well-posed, globally regular (i.e. uniformly bounded in the $H^1$-norm) and tracks the original solution $u$ from which data is collected, provided the observed data satisfy a certain condition.  We emphasize that this condition is imposed on the observed data and no assumption is made on the regularity (or for that matter, uniqueness) of the (Leray-Hopf weak) solution $u$. Our condition is automatically satisfied for a regular solution; however it is unclear whether regularity, or even uniqueness is implied by our condition.

We will give a brief description of our result in the case of the \emph{modal interpolant/observable}, even though our results are applicable to the more general case of a \emph{type 1 interpolant operator} (see Section \ref{sec:prelim} or \cite{AzouaniTiti2014, AzouaniOlsonTiti2014} for definition). Let $A$ denote the Stokes operator (see Section \ref{sec:prelim} or \cite{cf, Temam}) with either the space periodic or homogeneous Dirichlet boundary condition. It is well-known that $A$ is a positive self-adjoint operator with a compact inverse (on appropriate functional spaces as described in Section \ref{sec:prelim}) with eigenvalues
$0 < \lambda_1 < \lambda_2 < \cdots $, with $\lambda_K \ra \infty$. Let the modal interpolant  $P_K$ be orthogonal projection on the (finite dimensional) space spanned by eigenvectors corresponding to eigenvalues $\lambda_1, \cdots , \lambda_K$. Then our condition for the AOT algorithm to be well-posed, admitting a regular solution and possessing the tracking property roughly speaking reads 
(see Theorem \ref{thm:modalregularity} for a more precise formulation)
\be  \label{modalcond}
\exists \ K \in \N, t_0>0\ \mbox{such that}\ 
\lambda_K^{1/4} \gtrsim \frac{1}{\nu}\sup_{[t_0, \infty)}\|P_Ku\|_{H^1}.
\ee
Note first that \eqref{modalcond} depends only on the observed part of the data $P_Ku$ (i.e. the low modes) and does not involve the high modes. Moreover, due to the fact that any Leray-Hopf weak solution of the 3D NSE is unformly bounded in the $L^2$-norm (see \eqref{unifhbound}, \cite{cf}) and the fact that \cite{cf, Temam}
\[
\|P_Ku\|_{H^1} \sim \|A^{1/2}P_Ku\|_{L^2}\le 
\lambda_K^{1/2}\|u\|_{L^2},
\]
 we have
\[
\sup_{t \in [0,\infty)}\|P_Ku\|_{H^1} \lesssim \lambda_K^{1/2}\sup_{t \in [0,\infty)}
\|u\|_{L^2}.
\]
This bound however is much less stringent than the condition
in \eqref{modalcond}. 
A similar condition can be formulated for a more general type 1 interpolant (e.g. volume element) as well (see Theorem \ref{generalh1bound} and Theorem \ref{generalconvergence}). 
Denote 
\be  \label{kinf}
 K_{inf}= \min\{K: K\ \mbox{satisfies}\ \eqref{modalcond}\}.
 \ee
  Suppose that $u$ is globally regular and 
\be  \label{vunifbd}
\sup_{[t_0,\infty)}\|u\|_{H^1} < \infty . 
\ee
Since 
\[
\|P_Ku\|_{H^1} \sim \|A^{1/2}P_Ku\|_{L^2} \le \|u\|_{H^1},
\]
clearly,  in this case, we have $K_{inf} < \infty $.
However, $K_{inf}$ (equivalently $\lambda_{K_{inf}}$)
as defined in \eqref{kinf} may be much smaller than the upper bound provided by \eqref{vunifbd}. It is indeed possible that $u$ is globally regular (i.e. 
$u \in L^\infty ([0,T]; H^1)$ for all $T>0$, yet \eqref{vunifbd} does not hold (although it is known \cite{cf} that if  solution of the 3D NSE is globally regular \emph{for all initial data}, then in fact \eqref{vunifbd} holds). Furthermore, it is unclear whether $K_{inf} < \infty$ implies global regularity or even uniqueness of the Leray-Hopf weak solution $u$. Additionally, it should be noted that
the wave number $K_{inf}$ identified in \eqref{kinf} may be an interesting quantity even in the 2D setting, as it might provide a lower estimate for the determining wave number, at least numerically,  than the ones given for instance in \cite{JT93}, via bounding the $H^1$ norm of the solution as in \eqref{vunifbd}.

The organization of the paper is as follows. In Section \ref{sec:prelim}, we establish the requisite notation and state preliminary results and facts. In Section \ref{sec:mainresults}, we state and prove our main results while in Section \ref{sec:adaptive}, we provide an adaptive version of our akgorithm  which might be useful for computational purposes when the flow is turbulent in certain intervals of time.

\comments{
In this paper we consider a continuous in time feedback-control algorithm 
for data assimilation \cite{AzouaniOlsonTiti2014, FoiasMondainiTiti2016} for the three dimensional Navier-Stokes equations.   We will show that our algorithm is ``accurate" and ``globally stable". 
}

\section{Notation and Preliminaries} \label{sec:prelim}

The 3D incompressible Navier-Stokes equations (3D NSE) on a domain $\Om$ with time independent forcing (assumed for simplicity) \comments{and with either no-slip Dirichlet or periodic boundary conditions,}  is given by
\begin{equation}\label{nse1}
\frac{\partial u}{\partial t}+\nu Au + B(u,u)=f,\ \nabla\cdot u=0 \ for \ t\in(0,\infty)
\end{equation}
with an initial condition $u(0)=u_0$. Concerning the boundary conditions, we either assume that $\Om \subset \R^3$ is  bounded with boundary $\p \Om $  of class $C^2$ and $u\at{\p \Om}=0$ or that $\Om=[0,L]^3$ and $u$ is space periodic with space period $L$ in all variables with space average zero, i.e., 
${\D \int_\Om u=0}$. For simplicity, we also take the body force $f$ to be time-independent.

We briefly introduce the functional framework for \eqref{nse1};
for a more detailed discussion see \cite{cf, Temam}. For $\alpha \ge 0$, we will denote by $H^\alpha(\Om)$ the usual Sobolev space 
of order $\alpha$.
In the case of the \textbf{Periodic Boundary Conditions} we consider $\mathscr{V}$ as the set of all L-periodic trigonometric polynomials from $\mathbb{R}^3$ to $\mathbb{R}^3$ that are divergence free and have zero average. In the case of the \textbf{No-slip Dirichlet Boundary Conditions} we consider $\mathscr{V}$ as the set of all $C^{\infty}$ vector fields from $\Omega$ to $\mathbb{R}^3$ that are divergence free and compactly supported.  Then $H$ is the closure of $\mathscr{V}$ with respect to the norm in $L^2(\Omega)$ and $V$ is the closure of $\mathscr{V}$ with respect to the norm in $H^1(\Omega)$. The inner products in H and V are given by:
\[
(u,v)_{L^2}=\int\limits_{\Omega} u(x)\cdot v(x)dx\ \forall\ u,v\in H
\]
\[
((u,v))=\int\limits_{\Omega}\sum\limits_{i=1}^2 \frac{\partial u}{\partial x_i}\cdot\frac{\partial v}{\partial x_i} dx\ \forall u,v\in V,
\]
and the corresponding norms are given by $|u|=(u,u)^{1/2}$, and $\lVert u \lVert=((u,u))^{1/2}$ respectively. We also denote by 
$\P$ the Leray-Hopf orthogonal projection operator from $L^2(\Om)$ to $H$. The Stokes operator is given by
\[
A v = \P (-\Delta)v, v \in H^2(\Om) \cap V.
\]
We recall that $A$ is a positive self adjoint operator with a compact inverse and $D(A)=\{u\in V:Au\in H\}$. Moreover, there exists a complete orthonormal set of eigenfunctions $\phi_i\in H$ such that $A\phi_i=\lambda_i \phi_i$ where $0<\lambda_1\le\lambda_2 \le \lambda_3 \le \cdots $ are the eigenvalues of $A$ repeated according to multiplicity. In case $u \in D(A^{\alpha}), \alpha \ge 0$ then $u \in H^{2\alpha}(\Om)$ and $\|u\|_{H^{2\alpha}} 
\sim |A^{\alpha}u|$.

 We denote by $H_N$ the space spanned by the first $N$ eigenvectors of $A$ and the orthogonal projection from $H$ onto $H_N$ is denoted by $P_N$. We also recall the 
Poincar\'e inequality
\be \label{poincare}
\lambda_1^{1/2}|v| \le  \|v\|, v \in V.
\ee

Let $V'$ denote the dual space of $V$.
The bilinear continuous operator $B$ from $V \times V$ to $V'$ is defined by
\[
\<B(u,v), w\>=\sum_{i,j} \int_\Om u_i (\p_i v_j) w_j.
\]
The bilinear term B satisfies the orthogonality property 
\[
B(u,w,w)=0\ \forall u,w\in V.
\]
Moreover, the bilinearity of $B$ implies
\be  \label{bilindif}
B(u,u)-B(w,w)=B(u-w,u)-B(w,w-u)=B(\widetilde{w},u)+B(w,\widetilde{w}).
\ee
We recall some well-known bounds on the bilinear term in the 3D case which appear in \cite{robinson, Temam}.
\begin{prop}
If $u,v\in V,\ w\in H$ then
\begin{equation}\label{bilinear3}
|(B(u,v),w)|\le c\|u\|_{L^6}\|\nabla v\|_{L^3}\|w\|_{L^2} \le c \| u\| \|v\|^{1/2} |Av|^{1/2} |w|.
\end{equation}
Moreover, if $u,v,w\in V$, then
\begin{equation}\label{bilinear4}
|(B(u,v),w)|\le c\|u\|_{L^4}\|\nabla v\|_{L^2} \|w\|_{L^4} \le c |u|^{1/4}\|u\|^{3/4} \|v\| |w|^{1/4} \|w\|^{3/4}.
\end{equation}
\end{prop}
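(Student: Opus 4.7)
The plan is to derive both inequalities by combining a direct application of H\"older's inequality with the standard 3D Sobolev embeddings $H^1 \hookrightarrow L^6$, $H^2 \hookrightarrow W^{1,6}$, and the Ladyzhenskaya-type interpolation inequality $\|\phi\|_{L^4} \le c|\phi|^{1/4}\|\phi\|^{3/4}$. The first inequality of each pair is essentially a three-factor H\"older estimate; the second inequality is where Sobolev embeddings and interpolation between $L^p$ spaces come in.

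For the first estimate, I start from the pointwise expression $(B(u,v),w) = \sum_{i,j}\int_\Om u_i(\p_i v_j) w_j$ and apply H\"older with exponents $(6,3,2)$ (which satisfy $\tfrac{1}{6}+\tfrac{1}{3}+\tfrac{1}{2}=1$) to obtain
\[
|(B(u,v),w)| \le c\|u\|_{L^6}\|\nabla v\|_{L^3}\|w\|_{L^2}.
\]
To get the second inequality in \eqref{bilinear3}, I use the Sobolev embedding $H^1 \hookrightarrow L^6$ in three dimensions to write $\|u\|_{L^6}\le c\|u\|$. For the $L^3$ factor I interpolate between $L^2$ and $L^6$ as $\|\nabla v\|_{L^3}\le c\|\nabla v\|_{L^2}^{1/2}\|\nabla v\|_{L^6}^{1/2}$, and then bound $\|\nabla v\|_{L^6}\le c\|v\|_{H^2}\sim |Av|$ via the embedding $H^2\hookrightarrow W^{1,6}$ (which, together with $\|\nabla v\|_{L^2}\sim \|v\|$, yields $\|\nabla v\|_{L^3}\le c\|v\|^{1/2}|Av|^{1/2}$). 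Multiplying the three factors produces the stated bound.

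For the second estimate, I again apply H\"older, this time with exponents $(4,2,4)$, to get
\[
|(B(u,v),w)| \le c\|u\|_{L^4}\|\nabla v\|_{L^2}\|w\|_{L^4}.
\]
Then the 3D Ladyzhenskaya inequality $\|\phi\|_{L^4}\le c|\phi|^{1/4}\|\phi\|^{3/4}$ (which follows from Gagliardo--Nirenberg, or equivalently from interpolation between $L^2$ and $L^6$ combined with the $H^1\hookrightarrow L^6$ embedding) applied to both $u$ and $w$, together with $\|\nabla v\|_{L^2}\sim \|v\|$, yields the final form in \eqref{bilinear4}.

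The only mildly delicate point is justifying the Sobolev embedding constants uniformly across the two boundary-condition cases (periodic vs.\ no-slip Dirichlet); in both settings, zero mean (periodic case) and Poincar\'e's inequality \eqref{poincare} (Dirichlet case) let us replace the full $H^1$ norm by $\|v\|$ and the full $H^2$ norm by $|Av|$, so the inequalities hold with constants depending only on $\Om$. There is no real obstacle here---the result is classical, and the work is essentially bookkeeping of H\"older exponents and Sobolev embeddings.
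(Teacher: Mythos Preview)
Your proof is correct and is exactly the standard argument: H\"older with exponents $(6,3,2)$ and $(4,2,4)$, followed by the 3D Sobolev embedding $H^1\hookrightarrow L^6$, the interpolation $\|\nabla v\|_{L^3}\le c\|\nabla v\|_{L^2}^{1/2}\|\nabla v\|_{L^6}^{1/2}$, and the Ladyzhenskaya inequality $\|\phi\|_{L^4}\le c|\phi|^{1/4}\|\phi\|^{3/4}$. The paper does not actually prove this proposition---it simply cites \cite{robinson, Temam}---and your argument is precisely the one found in those references, so there is nothing to compare.
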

\begin{dfn}  \label{def:wksol}
$u$ is said to be a {\em weak solution} of \eqref{nse1}  if for all $T>0$, $u$ belongs to  $L^\infty([0,T]; H) \cap L^2([0,T]; V) \cap C([0,T]; V')$ and satisfies
\[
\dt (u,v) + \nu ((u,v)) +(B(u,u),v)=(f,v)\ \forall v \in V
\ \mbox{and}\ u(0)=u_0.
\]
Additionally, a weak solution is said to be {\em strong solution} if it also belongs to $L^\infty((0,T); V) \cap L^2((0,T); D(A))$.
\end{dfn}
Note that the equality $u(0) =u_0$ makes sense as $u \in C([0,T]; V')$. The Galerkin approximation corresponding to \eqref{nse1} is given by the solution $u_N$ of the following Galerkin system:
\be \label{nse1u}
\begin{split}
\frac{d u_N}{d t}+\nu Au_N + P_N B(u_N,u_N)=P_Nf,\\
 \nabla\cdot u_N=0,\\
u_N(0)= P_Nu(0).
 \end{split}
 \ee
The following theorem due to Leray \cite{cf, robinson, Temam} gives us the existence of weak solutions to (\ref{nse1}) in 3D.
\begin{thm}   \label{thm:Leray}
Let $f\in L_{loc}^2(0,T;V^*)$. Then if $u_0\in H$, there is a weak solution of (\ref{nse1}) such that for any $T>0$,
\[
u\in L^{\infty}(0,T;H)\cap L^2(0,T;V),
\]
and the equation holds as an equality in $L^{4/3}(0,T;V')$.
Moreover, there exists a subsequence $\{u_{N_k}\}$ which converges to a weak solution $u$ weakly in $L^2([0,T]; V)$, strongly in 
$L^2([0,T]; H)$ and in $C([0,T]; V')$.
\end{thm}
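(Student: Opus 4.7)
The plan is to construct a weak solution via the Galerkin method. First, the finite-dimensional Galerkin system (\ref{nse1u}) is a locally Lipschitz ODE on the finite-dimensional space $H_N$, so it admits a unique local solution $u_N$. Taking the $L^2$ inner product of the Galerkin equation with $u_N$ and using the orthogonality property $(B(u_N,u_N),u_N)=0$, one gets
\[
\frac{1}{2}\frac{d}{dt}|u_N|^2 + \nu\|u_N\|^2 = \langle f,u_N\rangle,
\]
and after applying Young's inequality and (\ref{poincare}) to the right-hand side, one obtains uniform-in-$N$ bounds
\[
\sup_{t\in[0,T]}|u_N(t)|^2 + \nu\int_0^T \|u_N(s)\|^2\,ds \le C(T,|u_0|,\|f\|_{V'},\nu,\lambda_1).
\]
These \emph{a priori} estimates also preclude blow-up, so $u_N$ exists globally and is uniformly bounded in $L^\infty([0,T];H)\cap L^2([0,T];V)$.

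Next I would control the time derivative. Using (\ref{bilinear4}) with $w\in V$ and the orthogonality of $P_N$, one gets
\[
|\langle B(u_N,u_N),w\rangle| \le c|u_N|^{1/2}\|u_N\|^{3/2}\|w\|,
\]
so $\|B(u_N,u_N)\|_{V'}\le c|u_N|^{1/2}\|u_N\|^{3/2}$, which lies in $L^{4/3}([0,T])$ by interpolation between the $L^\infty_tH$ and $L^2_tV$ bounds. Combining with the linear terms gives a uniform bound on $du_N/dt$ in $L^{4/3}([0,T];V')$.

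Now I invoke the Aubin-Lions compactness lemma with the triple $V\hookrightarrow\!\hookrightarrow H\hookrightarrow V'$ (the first embedding being compact): along a subsequence $\{u_{N_k}\}$, one obtains weak convergence in $L^2([0,T];V)$, weak-$*$ convergence in $L^\infty([0,T];H)$, strong convergence in $L^2([0,T];H)$, and (using the $L^{4/3}([0,T];V')$ bound on the time derivative together with an Arzel\`a--Ascoli-type argument) convergence in $C([0,T];V')$. The limit $u$ inherits the regularity stated in the theorem.

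The main obstacle, as always for 3D Navier--Stokes, is passing to the limit in the nonlinear term, and this is where strong compactness in $L^2([0,T];H)$ is crucial. Using the decomposition from (\ref{bilindif}), one writes
\[
B(u_{N_k},u_{N_k}) - B(u,u) = B(u_{N_k}-u,u_{N_k}) + B(u,u_{N_k}-u),
\]
and, after integrating against a test function $v$ in some fixed $H_M$ and in time, integrates by parts to move a derivative off the factor involving $u_{N_k}-u$; the strong $L^2_tL^2_x$ convergence combined with the uniform $L^2_tV$ bound then yields passage to the limit. Together with weak convergence in the linear terms, this shows $u$ satisfies the weak formulation tested against any $v\in\bigcup_M H_M$, which is dense in $V$. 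Finally, $u(0)=u_0$ follows from the initial data of $u_{N_k}$ and the $C([0,T];V')$ convergence, completing the proof.
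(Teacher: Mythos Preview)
Your proof sketch is the standard Galerkin construction and is correct in outline. However, you should note that the paper does not actually prove this theorem: it is stated as a classical result due to Leray and attributed to the references \cite{cf, robinson, Temam}. The authors simply quote it in order to define the class $\frak{W}$ of restricted Leray--Hopf weak solutions used later. So there is no ``paper's own proof'' to compare against; what you have written is essentially the argument one finds in the cited texts (Constantin--Foias, Robinson, Temam), and it matches the level of detail appropriate for a background result of this kind.
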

\begin{dfn}
Following \cite{LR}, 
we will say that $u$ is a {\em restricted Leray-Hopf weak solution} if it is obtained as a subsequential limit of a Galerkin system where the convergence is as given in Theorem \ref{thm:Leray}. We will denote by  $\frak{W}_{u_0}$ the set of all restricted weak solutions of \eqref{nse1} with initial data $u_0 \in H$ and we denote $\frak{W}=\bigcup_{u_0 \in H} \frak{W}_{u_0}$.
\end{dfn}
It can be shown \cite{cf} that any $u \in \frak{W}$ is in fact a {\em Leray-Hopf} weak solution, i.e., for $a.e.\ t_0 \in [0,\infty)$, including $t_0=0$, 
they satisfy the energy inequality
\beas
\frac12 |u(t)|^2 + \nu \int_{t_0}^t \|u(s)\|^2\, ds &\le &
\frac12 |u(t_0)|^2 + \int_{t_0}^t (f,u(s))\,ds,\quad  t \ge t_0.
\eeas
Additionally, they also satisfy the energy bound
\[
|u(t)|^2 \le e^{-\nu \lambda_1t}|u_0|^2 + \frac{|f|^2}{\nu^2 \lambda_1^2}\lf (1- e^{-\nu \lambda_1t}\rg ), t \ge 0.
\]
Consequently, there exists a time $t_* = t_*(u_0)$ such that
\be  \label{unifhbound}
 |u(t)|^2 \le 2G^2\nu^2\lambda_1\ \forall\ t \ge t_*,
 \ \mbox{where the Grashoff number}\  G := \frac{|f|}{\nu^2 \lambda_1^{3/2}}.
\ee
\begin{rem}
The existence of weak solutions via the Galerkin construction shows that the class $\frak{W}_{u_0}$ is non-empty. However it is presently unknown whether restricted weak solutions (and therefore Leray-Hopf weak solutions) are unique, i.e. whether or not the cardinality of $\frak{W}_{u_0}$ is one. 
\end{rem}

\subsection{Interpolant Operators}

\begin{dfn}
A {\em finite rank, bounded linear operator} $I_h:H\to L^2(\Om)$ is said to be a type-I interpolant observable if there exists a dimensionless constant $c>0$ such that
\begin{equation}\label{type1}
|I_hv| \le c|v|\ \forall\ v \in H\ \mbox{and}\ 
|I_h v-v|\le ch\lVert v \lVert\ \forall\ v \in V.
\end{equation}
\end{dfn}
The orthogonal projection operator $P_K$, also known as the {\em modal interpolant},  provides 
such an example. Indeed, it is easy to check that it  satisfies (\ref{type1}):
\begin{equation}\label{type1k}
|P_Kv| \le |v|\ \forall\ v \in H\ \mbox{and}\ |P_K v-v|\le \frac{1}{\lambda_K^{1/2}}\lVert v \lVert \ \forall\ v \in V.
\end{equation}
Thus \eqref{type1} is satisfied with $h = \frac{1}{\sqrt{\lambda_K}}$. 
Another physically relevant example of a type I interpolant, which is important from the point of view of applications
is the volume elements interpolant  \cite{AzouaniOlsonTiti2014}
given by
\be  \label{volumeint}
I_h(\phi)(x)=\sum\limits_{j=1}^N \bar{\phi_j}\left(\chi_{Q_j}(x)-\frac{h^3}{L^3}\right)\ \ where \ \ \bar{\phi_j}=\frac{1}{h^3}\int_{Q_j} \phi(x) dx,
\ee
where the domain has been divided into equal cubes $Q_j$ of side length $h$. As shown in \cite{FoTi91, JT92, JT93}, the volume element interpolant satisfies \eqref{type1}.


\comments{Since $I_h$ stands for the observation operator, we will denote its range 
by $H_\co \subset L^2(\Omega)$. In applications, $H_\co$ is a finite dimesional subspace.}

\section{Wellposedness and the Tracking Property
of the data assimilation system}  \label{sec:mainresults}
 For the remainder of the paper, {\em  we will assume that $u \in \frak{W}$ is a restricted global weak solution} of \eqref{nse1} corresponding to initial data
 $u_0 \in V$, i.e., $u$  can be approximated by a sequence of solutions $\{u_N\}$ of the Galerkin system in the following way: $u_N\to u$ weakly in $L^2(0,T;V)$, strongly in $L^2(0,T;H)$, and  in $C(0,T;V')$ (equipped with the sup-norm on $[0,T]$).
We begin by describing the AOT data assimilation system  that we consider here.
The observations are given by:
\be   \label{observations}
\mbox{Observation}\ \co= \{I_hu(t)\}_{t \ge 0}
\ee
where $I_h$ is a Type-I interpolant (e.g. either a modal or volume interpolant). Since $I_h$ is of finite rank and $u \in C([0,T]; V')$, the mapping $t \ra I_hu(t)$ from $[0,\infty)$ to $L^2(\Om)$ is continuous. 
\comments{and we assume
\[
\sup_{t\in[0,\infty)} \| I_h u(t) \| < \infty
\]
}
  Our data assimilation algorithm is given by the solution 
  $w$ of the equation
\be \label{nse1a}
\begin{split}
\frac{d w}{d t}+\nu Aw + B(w,w)=f-\mu\mathbb{P}_{\sigma}I_h(w-u)\\
 \nabla\cdot w=0,\\
 w(0)=0.
 \end{split}
 \ee
 We made the choice $w(0)=0$ for specificity. However, 
 the AOT data assimilation system can be initialized by any initial value. Recall that for notational simplicity, we have assumed that $f$ is time independent.
 The Galerkin approximation of w is given by the solution of the equation
\be \label{nse1aa}
\begin{split}
\frac{d w_N}{d t}+\nu Aw_N + P_N B(w_N,w_N)=P_Nf-\mu P_N I_h(w_N-u)\\
 \nabla\cdot w_N=0,\\
w_N(0)= P_Nw(0)=0.
 \end{split}
 \ee
 \subsection{Global Existence of a Weak Solution.}
 We will now show existence of a global (in time) weak solution of 
 \eqref{nse1a}, where the definition of a weak solution is similar to Definition \ref{def:wksol}. As in the case of the 3D NSE, we proceed by establishing {\em a priori} bounds on the Galerkin system \eqref{nse1aa}. Henceforth, by translating time if necessary, we will assume that the weak solution $u \in \frak{W}$ satisfies \eqref{unifhbound} for all $t \ge 0$.
 \begin{thm}  \label{thm:weakexistence}
 Let $u \in \frak{W}$ satisfy \eqref{unifhbound} for all $t \ge 0$ and $I_h$ be any type 1 interpolant satisfying \eqref{type1}. Then, provided
 \be \label{mucond1}
\nu\lambda_1 \le \mu  \le \frac{\nu}{4ch^2}\quad (c \ \mbox{as in}\ \eqref{type1}),
 \ee
there is a weak solution $w$ of (\ref{nse1a}) such that for any $T>0$,
\[
w \in L^{\infty}(0,T;H)\cap L^2(0,T;V)\ \mbox{with}\ 
|w(t)|^2 \lesssim G^2\nu^2 \lambda_1\ \forall t \ge 0,
\]
and the equation holds as an equality in $L^{4/3}(0,T;V')$.
Moreover, there exists a subsequence $\{w_{N_k}\}$ which converges to a weak solution $w$ weakly in $L^2([0,T]; V)$, strongly in 
$L^2([0,T]; H)$ and in $C([0,T]; V')$. 

Additionally, any two strong solutions $w_1, w_2$ 
on the interval $[0,T]$ \\
(i.e. $\sup_{t \in [0,T]}\|w_i(t)\| < \infty , i=1,2$) of \eqref{nse1a} coincide.
 \end{thm}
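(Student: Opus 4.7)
The plan is to follow the classical Galerkin scheme for Leray--Hopf weak solutions of the 3D NSE adapted to carry along the nudging term, using the two-sided bound on $\mu$ in \eqref{mucond1} to close the energy estimate uniformly in $N$, and then to establish uniqueness of strong solutions by a Gr\"onwall argument on the difference equation.

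\emph{Step 1: Uniform a priori bound for the Galerkin system.} I would take the $L^2$ inner product of \eqref{nse1aa} with $w_N$; orthogonality kills $(B(w_N,w_N),w_N)$. Since $w_N\in H_N$, the feedback term becomes
\[
\mu(I_h(w_N-u),w_N)=\mu|w_N|^2+\mu(I_hw_N-w_N,w_N)-\mu(I_hu,w_N).
\]
Using $|I_hv-v|\le ch\|v\|$ and Young's inequality with the upper bound $\mu\le\nu/(4ch^2)$ absorbs $\mu|(I_hw_N-w_N,w_N)|$ into $\tfrac{\nu}{4}\|w_N\|^2+\tfrac{\mu}{4}|w_N|^2$. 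The type-I bound $|I_hu|\le c|u|$ together with \eqref{unifhbound} controls $\mu(I_hu,w_N)$, and $(f,w_N)$ is handled by Poincar\'e. This yields
\[
\tfrac{d}{dt}|w_N|^2+\nu\|w_N\|^2+\mu|w_N|^2\lesssim\tfrac{|f|^2}{\nu\lambda_1}+\mu G^2\nu^2\lambda_1.
\]
The lower bound $\mu\ge\nu\lambda_1$ then makes $|f|^2/(\mu\nu\lambda_1)\lesssim G^2\nu^2\lambda_1$, and Gr\"onwall (with $w_N(0)=0$) gives $|w_N(t)|^2\lesssim G^2\nu^2\lambda_1$ uniformly in $N$ and $t\ge 0$, hence $\|w_N\|_{L^2(0,T;V)}^2\lesssim T G^2\nu^2\lambda_1$.

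\emph{Step 2: Compactness and passage to the limit.} Standard dual bounds on $B(w_N,w_N)$, $Aw_N$, and the finite-rank feedback term give $\tfrac{dw_N}{dt}$ uniformly bounded in $L^{4/3}(0,T;V')$. Aubin--Lions then extracts a subsequence $w_{N_k}$ converging weakly in $L^2(0,T;V)$, strongly in $L^2(0,T;H)$, and in $C([0,T];V')$. Passage to the limit in $B(w_{N_k},w_{N_k})$ is the classical argument for 3D NSE. The only new ingredient is the term $\mu P_{N_k}I_h(w_{N_k}-u)$: because $I_h$ is bounded $H\to L^2$, strong $L^2(0,T;H)$ convergence of $w_{N_k}$ yields strong $L^2(0,T;L^2)$ convergence of $I_hw_{N_k}$ to $I_hw$, and $P_{N_k}\to I$ strongly on $H$, so we pass to the limit. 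The a priori bound is preserved by weak-$*$ lower semicontinuity.

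\emph{Step 3: Uniqueness of strong solutions.} Let $\widetilde w=w_1-w_2$ with $\widetilde w(0)=0$ and $w_i\in L^\infty(0,T;V)\cap L^2(0,T;D(A))$. Using \eqref{bilindif}, $\widetilde w$ satisfies
\[
\partial_t\widetilde w+\nu A\widetilde w+B(\widetilde w,w_1)+B(w_2,\widetilde w)=-\mu\mathbb P_\sigma I_h\widetilde w,
\]
with the $u$-contributions cancelling exactly. Taking the inner product with $\widetilde w$, the term $(B(w_2,\widetilde w),\widetilde w)$ vanishes, and \eqref{bilinear4} gives $|(B(\widetilde w,w_1),\widetilde w)|\lesssim |\widetilde w|^{1/2}\|\widetilde w\|^{3/2}\|w_1\|$, which Young splits as $\tfrac{\nu}{2}\|\widetilde w\|^2+C\nu^{-3}\|w_1\|^4|\widetilde w|^2$. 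The nudging term is handled exactly as in Step 1 via the upper bound on $\mu$. Since $\|w_1\|^4\in L^1(0,T)$, Gr\"onwall forces $\widetilde w\equiv 0$.

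\emph{Main obstacle.} The chief difficulty is producing $N$-independent bounds in the absence of any $V$-regularity for the reference solution $u$: only $|u|\lesssim G\nu\lambda_1^{1/2}$ is available. This is precisely where \eqref{mucond1} is indispensable --- the upper bound on $\mu$ prevents the interpolation remainder $I_hw_N-w_N$ from destroying the dissipation, while the lower bound on $\mu$ ensures that the forcing and observation-driven inhomogeneity can be absorbed into the exponentially decaying part of the energy inequality.
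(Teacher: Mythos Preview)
Your proposal is correct and follows essentially the same route as the paper: the identical decomposition of the nudging term, the same use of the upper bound on $\mu$ to absorb the interpolation remainder $I_hw_N-w_N$ into the dissipation, the same appeal to $|I_hu|\le c|u|$ together with \eqref{unifhbound} for the observation term, and the same Gr\"onwall/compactness/difference-equation arguments. The only cosmetic discrepancy is that your $L^2(0,T;V)$ bound should carry a factor of $\mu$ (the paper obtains $\nu\int_0^T\|w_N\|^2\lesssim(1+\mu T)G^2\nu^2\lambda_1$), but this does not affect the uniformity in $N$ that the compactness step requires.
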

 \begin{proof}
 We will start by establishing {\em a priori} estimates on the Galerkin system. Taking inner product of \eqref{nse1aa} with $w_N$, we readily obtain (after some elementary algebra)
 \bea
 \lefteqn{\frac12 \dt |w_N|^2 + \nu \|w_N\|^2} \nn \\
 & & = (f,w_N) -\mu|w_N|^2 +\mu(I_hw_N-w_N,w_N)+\mu (I_hu,w_N) \nn \\
 & & \le \frac{|f|^2}{\nu \lambda_1}+\frac{\nu\lambda_1}{4}|w_N|^2
 + \frac{\mu}{4}|w_N|^2 + \mu c h^2 \|w_N\|^2 \nn \\
 & & \qquad \qquad \qquad  + \mu|I_hu|^2 + \frac{\mu}{4}|w_N|^2 - \mu |w_N|^2,  \label{energyineq1}
 \eea
 where to obtain \eqref{energyineq1}, we used Cauchy-Schwartz and Young inequalities, in conjunction with the second inequality in \eqref{type1}. Using the inequalities \eqref{poincare},   \eqref{unifhbound} and the first inequality in \eqref{type1}, we readily obtain
 \be  \label{energyineq}
  \dt |w_N|^2 + \mu|w_N|^2 + \nu \|w_N\|^2 
 \le \frac{|f|^2}{\nu \lambda_1}+c\mu G^2\nu^2 \lambda_1.
 \ee
 Dropping the last term from the left and applying Gronwall inequality together with \eqref{mucond1} and recalling $w_N(0)=0$, we get
 \[
 |w_N|^2 \le \frac{|f|^2}{\mu \nu \lambda_1}+ cG^2 \nu^2\lambda_1
 \lesssim G^2\nu^2\lambda_1.
 \]
 Integrating both sides of \eqref{energyineq} and insering the above bound , we immediately obtain
 \[
 \nu \int_0^T \|w_N\|^2 \lesssim \lf(1+\mu T\rg)G^2\nu^2\lambda_1.
 \]
 The remainder of the proof is similar to the proof of existence of  weak solutions of the 3D NSE \cite{cf, Temam}.
 
 We will now prove uniqueness of strong solutions. Observe that
 $\|B(w,w)\|_{V'} \lesssim \|w\|_{L^4}^2 \le |w|^{1/2}\|w\|^{3/2}$.  It is now easy to see from \eqref{nse1a} that if $w$ is a strong solution on $[0,T]$, then $\dt w \in L^2([0,T];V')$ and consequently, $|w(t)|^2$ is differentiable $a.e.$ on $[0,T]$. 
 Let $\tw=w_2-w_1$. Then $\tw$ satisfies
 \be  \label{diffeqn}
 \dt \tw + \nu A\tw + B(w_2,\tw) + B(\tw,w_1)=-\mu I_h\tw.
 \ee
 Let 
 \be  \label{maxdef}
 M= \sup_{[0,T]} \|w_1\|< \infty. 
 \ee
 By taking inner product with $\tw$ with \eqref{diffeqn}, using the properties of type 1 interpolant in \eqref{type1} and the estimate of the nonlinear term in \eqref{bilinear4}, we readily obtain
 \beas
 \lefteqn{\dt |\tw|^2 + \nu \|\tw\|^2 } \\
 & & \le c |\tw|^{1/2}\|\tw\|^{3/2}\|w_1\| -\mu|\tw|^2 + \mu ch^2\|\tw\|^2 + \frac{\mu}{2}|\tw|^2 \\
 & & \le \frac{cM}{\nu^3}|\tw|^2 + \frac{\nu}{2}\|\tw\|^2 
 + \frac{\mu}{2}\|\tw\|^2 - \frac{\mu}{2}|\tw|^2,
 \eeas
 where to obtain the inequality in the line above, we used Young's inequality together with \eqref{maxdef} and the condition on $\mu$ in \eqref{mucond1}. Since $\tw(0)=0$, using Gronwall inequality, we readily conclude that $\tw=0$ on $[0,T]$, i.e., $w_1$ and $w_2$ coincide on $[0,T]$.
 \end{proof}
 \begin{rem}
 Observe that in the proof of the uniqueness presented above, the bound in $V$ of only one of the two solutions appears explicitly. Indeed, one can prove a weak-strong uniqueness as in the case of the 3D NSE, a result due to J. Sather and J. Serrin
 \cite{SS}. More precisely, if there exists a strong solution $w$ of \eqref{nse1a} on $[0,T]$, then it coincides with any other Leray-Hopf weak solution of \eqref{nse1a}. The proof is similar to that of the Sather-Serrin result \cite{SS}.
 
 \end{rem}
 \subsection{Global Existence of a Strong Solution
 and tracking property.}
Thus far, no assumption on the solution $u$  was necessary to establish existence of weak solution of \eqref{nse1a}. In order to ensure global existence of a (hence the) regular solution of \eqref{nse1a} and to establish the tracking property (i.e. to show that it tracks $u$ asymptotically), we need  to impose condition(s) on the {\em observed data} coming from the solution $u$. For clarity of exposition, we will consider the case of modal interpolant first before proceeding to a more general type 1 interpolant.
\subsubsection{Modal Interpolant Case (i.e. $I_h=P_K$)}
\hspace{\fill}\\

Before we proceed, we first note that $\sup_{t \ge 0}\|P_Ku\| < \infty $. Indeed, 
\[
\|P_Ku\| \le \lambda_K^{1/2}|P_Ku| 
\lesssim \lambda_K^{1/2}G^2\nu^2\lambda_1,
\]
where the last inequality follows from \eqref{unifhbound}.
However, as we will see below (see Remark \ref{rem:significancemodal} and \eqref{datamodalcond}), this bound is insufficient to guarantee that $w$ tracks $u$. We require a more stringent bound as given in \eqref{mumodalcond1} or equivalently, as in \eqref{datamodalcond}.
\begin{thm}  \label{thm:modalregularity}
Suppose $I_h=P_K$ be modal interpolant which satisfies (\ref{type1k}).
Let $0 < T \le \infty$ and denote
\be  \label{mkdef}
M_K^2=M_{K,u}^2:= 8 \left(\frac{|f|^2}{\nu^2\lambda_1}  +  \sup_{t\in [0,T)}\|P_K(u)\|^2\right).
\ee
Assume that
\be  \label{mumodalcond1}
\nu \max\left\{\frac{2cM_K^4}{\nu^4},\lambda_1\right\}\le \mu \le 
 \frac{\nu \lambda_K}{4}.
 \ee
 Then any  weak solution of \eqref{nse1a} constructed in Theorem \ref{thm:weakexistence} as a subsequential limit of $w_N$ satisfying \eqref{nse1aa} is regular on $[0,T]$, i.e., it satisfies
\be   \label{wh1bound}
\|w(t)\| \le M_K, t \in [0,T].
\ee
\comments{
Consequently, for a choice of $\mu$ is an \eqref{mumodalcond1}, any weak solution $w_1$ of \eqref{nse1a} coincides with the regular solution $w$  on the interval $[0,T]$.
}
\end{thm}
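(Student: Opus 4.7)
The plan is to promote the weak a priori $L^2$-bound of Theorem~\ref{thm:weakexistence} to a uniform $V$-bound on the Galerkin approximants $\{w_N\}$ via a barrier (continuity) argument, and then pass to the limit along the subsequence already extracted in Theorem~\ref{thm:weakexistence}.

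First I would test the Galerkin equation (\ref{nse1aa}) with $Aw_N$. The key algebraic fact, special to the modal case $I_h=P_K$, is that $P_K$ commutes with $A$, so the nudging term is genuinely $V$-coercive:
$$-\mu(P_Kw_N,Aw_N) \;=\; -\mu\|P_Kw_N\|^2 \;=\; -\mu\|w_N\|^2 + \mu\|(I-P_K)w_N\|^2.$$
The high-mode residue on the right is bounded by $\mu\lambda_{K+1}^{-1}|A(I-P_K)w_N|^2$, which by the upper bound $\mu\le \nu\lambda_K/4$ in (\ref{mumodalcond1}) is absorbed into $\tfrac{\nu}{4}|Aw_N|^2$ on the dissipation side. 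The forcing and the source $\mu(P_Ku, Aw_N)=\mu(A^{1/2}P_Ku, A^{1/2}w_N)$ are handled by Cauchy--Schwarz/Young, and the nonlinearity via (\ref{bilinear3}) and Young: $|(B(w_N,w_N),Aw_N)|\le c\|w_N\|^{3/2}|Aw_N|^{3/2}\le \tfrac{\nu}{8}|Aw_N|^2 + c\nu^{-3}\|w_N\|^6$.

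Collecting these bounds, and using the lower bound $\mu\ge \nu\lambda_1$ to absorb $|f|^2/\nu$ into $\mu\,|f|^2/(\nu^2\lambda_1)$, I arrive at a differential inequality of schematic form
$$\tfrac{d}{dt}\|w_N\|^2 \;\le\; \tfrac{\mu}{2}M_K^2 \;+\; \Bigl(\tfrac{c}{\nu^3}\|w_N\|^4-\mu\Bigr)\|w_N\|^2.$$
The heart of the argument is then the barrier step. Since $w_N(0)=0<M_K$, set $t^*:=\inf\{t\in[0,T]:\|w_N(t)\|^2=M_K^2\}$ and suppose $t^*<\infty$. At $t=t^*$ the right-hand side collapses to $\bigl(cM_K^4/\nu^3 - \mu/2\bigr)M_K^2$, which is strictly negative by the other lower bound $\mu\ge 2cM_K^4/\nu^3$ in (\ref{mumodalcond1}). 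This contradicts the definition of $t^*$, so $\|w_N(t)\|\le M_K$ on $[0,T]$, uniformly in $N$.

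Finally, the uniform Galerkin bound passes to the limit: along the subsequence extracted in Theorem~\ref{thm:weakexistence}, weak-$*$ lower semicontinuity in $L^\infty(0,T;V)$ combined with the weak-in-$V$ continuity of $w$ (inherited from the equation plus the uniform bound) yields the pointwise inequality $\|w(t)\|\le M_K$ for every $t\in[0,T]$. The main obstacle is the interplay imposed by the sandwich condition (\ref{mumodalcond1}): the upper bound on $\mu$ is needed to absorb the high-mode residue into dissipation, while the lower bound must simultaneously dominate the $H^1$-supercritical cubic self-interaction at the barrier. Compatibility of this window is exactly what tying $M_K^2$ to $\sup\|P_Ku\|^2$ (rather than to the much cruder $\lambda_K\sup|u|^2$ implied by (\ref{unifhbound})) buys us.
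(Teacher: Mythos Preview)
Your proposal is correct and matches the paper's approach: test \eqref{nse1aa} against $Aw_N$, exploit the modal structure so that the nudging term is $V$-coercive up to a high-mode residue absorbed by dissipation via $\mu\le\nu\lambda_K/4$, bound the nonlinearity with \eqref{bilinear3} and Young, and close with a continuity/barrier argument followed by passage to the limit. The only cosmetic difference is that the paper runs Gronwall on the maximal interval $[0,T_1]$ to obtain $\|w_N\|^2\le\tfrac12 M_K^2<M_K^2$, which handles the borderline $\mu=2cM_K^4/\nu^3$ cleanly, whereas your ``strictly negative at $t^*$'' phrasing would yield only $\le 0$ in that edge case.
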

\comments{
\begin{rem}
This implies $\lambda_K\ge 8c M_K^4 \nu^{-3}$.
\end{rem}
}
\begin{proof}
As is customary, we will obtain {\em a priori} estimates on the Galerkin system and then pass to the limit.
We begin by rearranging (\ref{nse1aa}) with the assumption 
 $N\ge K$ to first obtain
\[
\frac{d w_N}{d t}+\nu Aw_N + P_N B(w_N,w_N)=f-\mu P_N(P_K(w_N)-w_N) + \mu P_K(u) - \mu w_N.
\]
Now taking the inner product with $Aw_N$ yields
\begin{align}\label{eq888}
\lefteqn{\frac{1}{2}\frac{d}{dt}\| w_N \| ^2 + \nu|Aw_N|^2 + \mu \|w_N\|^2} \nn \\
 &= (f,Aw_N) -(B(w_N,w_N),Aw_N)  \nn \\
& \qquad \qquad -\mu (P_K(w_N)-w_N,Aw_N) + \mu(P_K(u),Aw_N).
\end{align}
Each term on the right hand side is estimated below. First, using Cauchy-Schwartz and Young's inequality, we have
\[
|(f,Aw_N)|\le \frac{1}{\nu}|f|^2 + \frac{\nu}{4} |Aw_N|^2.
\]
Next, using \eqref{bilinear3} and Young's inequality, we obtain
\[
|(B(w_N,w_N),Aw_N)|\le c\|w_N\|^{3/2}|Aw_N|^{3/2}\le \frac{c}{\nu^3} \|w_N\|^6 + \frac{\nu}{4} |Aw_N|^2.
\]
Observe now that from \eqref{type1k} and Young's inequality, we obtain
\begin{align*}
&\mu|(P_K(w_N)-w_N,Aw_N)|\le \mu|P_K(w_N)-w_N||Aw_N|\le \mu\lambda_K^{-1/2} \|w_N\||Aw_N|\\
&\ \ \ \ \ \ \ \ \ \ \ \ \ \ \ \ \ \ \ \ \ \ \ \ \ \ \ \ \ \ \ \le \frac{\mu^2}{\nu\lambda_K}\|w_N\|^2 + \frac{\nu}{4}|Aw_N|^2\le \frac{\mu}{4}\|w_N\|^2 + \frac{\nu}{4}|Aw_N|^2.
\end{align*}
Moreover,
\beas
& & \mu|(P_K(u),Aw_N)|=\mu|(A^{1/2}P_K(u),A^{1/2}w_N)| \\
& & \le \mu \|P_K(u)\|\|w_N\| \le \mu\|P_K(u)\|^2 + \frac{\mu}{4}\|w_N\|^2.
\eeas
Inserting these estimates into (\ref{eq888}), we obtain
\be  \label{dblnormestmodal}
\frac{d}{dt}\|w_N\|^2 + (\mu-\frac{c}{\nu^3} \|w_N\|^4)\|w_N\|^2 \le \frac{2}{\nu}|f|^2 +2\mu\|P_K(u)\|^2.
\ee
Let $[0,T_1]$ be the maximal interval on which $\|w_N(t)\|\le M_K$ holds for $t\in[0,T_1]$ where $M_K$ as in \eqref{mkdef}.
 Note that $T_1>0$ exists because we have $w_N(0)=0$. Assume  that $T_1 < T$. Then by continuity, we must have $\|w_N(T_1)\|=M_K$.
 Using the lower bound for  $\mu$ in 
 \eqref{mumodalcond1}, for all $t \in [0,T_1]$, we obtain
 \[
 \frac{d}{dt}\|w_N\|^2 + \frac{\mu}{2}\|w_N\|^2 \le \frac{2}{\nu}|f|^2 +2\mu \sup_{s \in [0,T]}\|P_K(u(s))\|^2.
\]
 Since $w_N(0)=0$, by Gronwall inequality we immediately obtain
\[
\|w_N\|^2 \le \frac{4}{\nu^2\lambda_1} |f|^2 + 4\sup_{s\in[0,T)} \|P_K(u(s))\|^2\le\frac{1}{2}M_K^2\ \forall t \in [0,T_1].
\]
This contradicts $\|w_N(T_1)\|=M_K$. Therefore we conclude $T_1\ge T$
and consequently, $\|w_N(t)\| \le M_K$ for all $t \in [0,T]$. Passing to the limit as $N \ra \infty$, we obtain the desired conclusion for $w$.
\end{proof}

\begin{thm}  \label{thm:modalconvergence}
\comments{
Suppose $I_h=P_K$ is a modal interpolant and satisfies (\ref{type1k}), $\mu\ge \max\{2cM_K^4\nu^{-3},1\}$, $\lambda_K\ge\max\{2\mu\nu^{-1},4\mu\}$.  }
Assume that the hypotheses of Theorem \ref{thm:modalregularity} hold. Let $\tw=w-u$.
Then $|\tw(t)|^2 \le e^{-\frac{\mu}{2}t}|\tw(0)|^2$ for all 
$t \in [0,T]$. In particular, if in the statement of theorem 
\ref{thm:modalregularity} $T=\infty$, then
\[
\lim_{t\to\infty}|\widetilde{w}(t)|^2=0.
\]
\end{thm}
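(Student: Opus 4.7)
The plan is to derive a differential inequality for $|\tw(t)|^2$ and conclude by Gronwall. Subtracting \eqref{nse1} from \eqref{nse1a} and applying the bilinear identity in the form $B(w,w)-B(u,u)=B(\tw,w)+B(u,\tw)$ (easily verified directly and variant of \eqref{bilindif}) yields
\[
\frac{\partial \tw}{\partial t}+\nu A\tw+B(\tw,w)+B(u,\tw)=-\mu\,\P P_K\tw.
\]
Formally pairing with $\tw$ and using the orthogonality $(B(u,\tw),\tw)=0$ gives
\[
\frac{1}{2}\dt|\tw|^2+\nu\|\tw\|^2+(B(\tw,w),\tw)=-\mu|P_K\tw|^2.
\]

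For the nonlinear term I would invoke the 3D estimate \eqref{bilinear4} together with the uniform bound $\|w(t)\|\le M_K$ on $[0,T]$ from Theorem \ref{thm:modalregularity}:
\[
|(B(\tw,w),\tw)|\le c|\tw|^{1/2}\|\tw\|^{3/2}\|w\|\le cM_K|\tw|^{1/2}\|\tw\|^{3/2},
\]
and then apply Young's inequality with exponents $4/3$ and $4$ to bound this by $\tfrac{\nu}{4}\|\tw\|^2+c'M_K^4\nu^{-3}|\tw|^2$. For the nudging term, decompose $\tw=P_K\tw+(I-P_K)\tw$ so that $-\mu|P_K\tw|^2=-\mu|\tw|^2+\mu|(I-P_K)\tw|^2$, and use the spectral gap $|(I-P_K)\tw|^2\le \lambda_K^{-1}\|\tw\|^2$ together with the upper bound $\mu\le \nu\lambda_K/4$ in \eqref{mumodalcond1} to obtain $-\mu|P_K\tw|^2\le -\mu|\tw|^2+\tfrac{\nu}{4}\|\tw\|^2$.

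Combining, the two $\tfrac{\nu}{4}\|\tw\|^2$ contributions absorb into $\nu\|\tw\|^2$ on the left, producing
\[
\dt|\tw|^2+\Big(\mu-\frac{c''M_K^4}{\nu^3}\Big)|\tw|^2\le 0.
\]
The lower bound $\mu\ge 2cM_K^4/\nu^3$ from \eqref{mumodalcond1} forces the coefficient in front of $|\tw|^2$ to be at least $\mu/2$, so Gronwall delivers $|\tw(t)|^2\le e^{-\mu t/2}|\tw(0)|^2$ on $[0,T]$. When $T=\infty$, since \eqref{mumodalcond1} forces $\mu\ge \nu\lambda_1>0$, this gives $|\tw(t)|\to 0$ as $t\to\infty$.

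The main obstacle is rigorously justifying the formal energy identity, since $u$ is only a (restricted) Leray--Hopf weak solution and need not live in $L^\infty_t V$. The cleanest remedy is to perform the estimate at the Galerkin level: by Theorem \ref{thm:modalregularity} the bound $\|w_N\|\le M_K$ is uniform in $N$, and along a common subsequence $u_{N_k}\to u$, $w_{N_k}\to w$ in the modes of convergence supplied by Theorems \ref{thm:Leray} and \ref{thm:weakexistence}. Writing $\tw_{N_k}=w_{N_k}-u_{N_k}$, the calculation above (with $(B(u_{N_k},\tw_{N_k}),\tw_{N_k})=0$ again valid since $u_{N_k}$ is divergence free) is completely rigorous and yields $|\tw_{N_k}(t)|^2\le e^{-\mu t/2}|\tw_{N_k}(0)|^2$ with constants independent of $k$. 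Strong $L^2([0,T];H)$ convergence combined with extracting a further pointwise-a.e. subsequence then transfers the inequality to $\tw$ for a.e.\ $t$, and continuity of $\tw$ into $V'$ (together with the a priori $L^\infty_t H$ bound) upgrades it to every $t\in[0,T]$.
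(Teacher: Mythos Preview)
Your overall strategy matches the paper's: derive an energy inequality for $\tw$, absorb the nonlinear term using \eqref{bilinear4} and Young, split the nudging via $P_K$ and $Q_K=I-P_K$, invoke the bounds in \eqref{mumodalcond1}, and justify rigor at the Galerkin level before passing to the limit. The formal computation is correct.

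There is, however, a genuine gap in your Galerkin step. The data assimilation Galerkin system \eqref{nse1aa} nudges toward the \emph{true} observation $P_Ku$, not toward $P_Ku_N$. Hence when you subtract the Galerkin equation for $u_N$ to form $\tw_N=w_N-u_N$, the right-hand side is $-\mu P_K(w_N-u)$, which equals $-\mu P_K\tw_N+\mu P_K(u-u_N)$, not simply $-\mu P_K\tw_N$. Your claim that ``the calculation above \ldots\ yields $|\tw_{N_k}(t)|^2\le e^{-\mu t/2}|\tw_{N_k}(0)|^2$'' therefore overlooks the residual forcing $\mu P_K(u-u_N)$. The paper estimates this extra term by $\mu|P_K(u-u_N)|^2+\tfrac{\mu}{4}|\tw_N|^2$, carries it through Gronwall, and then kills it in the limit using the convergence $u_N\to u$ in $C([0,T];V')$ (which, since $P_K$ has finite-dimensional range, gives $\sup_{[0,T]}|P_K(u-u_N)|\to 0$). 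Without this step the Galerkin inequality you assert is simply false, and the passage to the limit does not close.

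A minor secondary point: for the limit itself the paper uses weak lower semicontinuity of the $H$-norm at each fixed $t$ (since $\tw_{N_k}(t)\rightharpoonup\tw(t)$), which is slightly cleaner than your route through strong $L^2_tH$ convergence plus an a.e.\ subsequence, though both are valid.
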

\comments{
\begin{rem}
This implies $\lambda_K\ge 2c M_K^4 \nu^{-3}\max\{2\nu^{-1},4\}$.
\end{rem}
}
\begin{proof}
Assume $N\ge K$ and $\widetilde{w}_N$ satisfies the following,
\begin{align*}
\frac{d\tw_N}{dt} + vA\tw_N + P_N B(w_N,w_N)-P_NB(u_N,u_N) = -\mu P_K(\tw_N) + \mu P_K(u-u_N)
\end{align*}
which can be rearranged to
\begin{align*}
\frac{d\tw_N}{dt} + vA\tw_N + P_NB(\tw_N,w_N)+P_NB(u_N,\tw_N) & = -\mu Q_K(\tw_N) -\mu \tw_N\\
& + \mu P_K(u-u_N).
\end{align*}
We take the inner product with $\tw_N$
\begin{align}\label{eq8888}
\frac{1}{2}\frac{d}{dt}|\tw_N|^2 + \nu\|\tw_N\|^2 + \mu |\tw_N|^2 &= -(B(\tw_N,w_N),\tw_N) -\mu(Q_K(\tw_N),\tw_N)\nn \\
& + \mu ( P_K(u-u_N),\tw_N)
\end{align}
and estimate each term on the right hand side as follows:
\begin{align*}
&|(B(\tw_N,w_N),\tw_N)|\le c|\tw_N|^{1/2} \|w_N\| \|\tw_N\|^{3/2}\le \frac{c}{\nu^3}\|w_N\|^4|\tw_N|^2 + \frac{\nu}{2}\|\tw_N\|^2\\
& \mu|(Q_K(\tw_N),\tw_N)|\le \mu |Q_K(\tw_N)||\tw_N|\le \frac{\mu}{\lambda_K^{1/2}}\|\tw_N\| |\tw_N|\\
& \ \ \ \ \ \ \ \ \ \ \ \ \ \ \ \ \ \ \ \ \ \ \ \ \ \ \ \ \ \le \frac{\mu}{\lambda_K}\|\tw_N\|^2 + \frac{\mu}{4} |\tw_N|^2 \le \frac{\nu}{2}\|\tw_N\|^2 + \frac{\mu}{4}|\tw_N|^2 \\
&\mu |(P_K(u-u_N),\tw_N)|\le \mu |P_K(u-u_N)||\tw_N|\le \mu |P_K(u-u_N)|^2 + \frac{\mu}{4}|\tw_N|^2.
\end{align*}
Inserting the estimates into (\ref{eq8888}),
\[
\frac{d}{dt}|\tw_N|^2 + (\mu-\frac{c}{\nu^3}\|w_N\|^4) |\tw_N|^2 \le \mu |P_K(u-u_N)|^2.
\]
Since $\mu$ satisfies \eqref{mumodalcond1}, we get
\[
\frac{d}{dt}|\tw_N|^2 + \frac{\mu}{2} |\tw_N|^2 \le \mu |P_K(u-u_N)|^2.
\]
Applying Gronwall, for all $t \in [0,T]$, we get
\[
|\tw_N(t)|^2\le e^{-(\mu/2)t}|\tw(0)|^2 + 2\sup_{t\in[0,T]}|P_K(u-u_N)|^2.
\]
Recall $u_N\to u$ in $C(0,T;V')$ and $\phi_i$ is the ith eigenvector associated with A. Therefore,
\[
\liminf_{N\to\infty}|P_K(u-u_N)|\le \liminf_{N\to\infty}\sum\limits_{i=1}^{K} |(u-u_N,\phi_i)|=0.
\]
Since $\tw_N$ converges to $\tw$ weakly,
\[
|\tw(t)|^2\le e^{-(\mu/2)t}|\tw(0)|^2.
\]
which proves the result.
\end{proof}
\begin{rem}
The fact that $u$ is a restricted Leray-Hopf weak solution, i.e., the fact that $u$ is obtained as a suitable limit of Galerkin (or some other appropriate limiting) procedure, is used only in the proof of Theorem \ref{thm:modalconvergence}. This is due to the fact that it is unknown whether for an arbitrary Leray-Hopf weak solution, the quantity $|u(t)|^2$ (and hence $|\tw(t)|^2=|(w-u)|^2$ is differentiable, although $|w(t)|^2$ is due to its being a strong solution. It should be noted that Theorem \ref{thm:modalregularity} and Theorem \ref{thm:modalconvergence} applies just as well to Leray-Hopf weak solutions obtained via other limiting procedures, such as limits in appropriate sense of solutions to the 3D Leray-$\alpha$ NSE as $\alpha \ra 0$. 
\end{rem}

\begin{rem}  \label{rem:significancemodal}
 Note that the conclusions of Theorem \ref{thm:modalregularity} and Theorem \ref{thm:modalconvergence} rest on the choice of $\mu$ satisfying \eqref{mumodalcond1}. This is possible provided there exists $K$ such that
\be \label{datamodalcond}
\lambda_K \gtrsim \max\left\{\frac{M_K^4}{\nu^4}, \lambda_1\right\},
\ee
where $M_K$ is as defined in \eqref{mkdef}. We emphasize {\em that this condition is expressed purely in terms of the observed data which in this case are the low modes of the solution} $u$ and does not involve information on the unknown high modes.

Suppose now that the solution $u$ is regular on $[0,T]$. For initial data $u_0 \in V$, it is well known that this happens if for some $\frac12 < \theta \le 1$ (here we take $\theta \le 1$ as $u_0 \in V=D(A^{1/2})$)
\[  
\sup_{t \in [0,T]} |A^{\theta/2}u|=M_\theta < \infty.
\]
In this case, 
\[
\|P_Ku\|=|P_KA^{1/2}u|=\lambda_K^{(1-\theta)/2}|A^{\theta/2}u|
\le \lambda_K^{(1-\theta)/2}M_\theta.
\]
Therefore, 
\[
M_K^2 \lesssim \left(\frac{|f|^2}{\nu^2\lambda_1}+\lambda_K^{(1-\theta)}M_\theta^2\right).
\]
Since $\theta > \frac{1}{2}$, a choice of $\lambda_K$ satisfying \eqref{datamodalcond} 
is indeed possible if $K$ if $K$ is chosen large enough. In the borderline case $\theta = \frac12$, by proceeding in an analogous manner, we get that \eqref{datamodalcond} can be satisfied if 
$M_\theta =\sup_{t \in [0,T]}|A^{1/4}u|$ is small. It should be noted that it is well-known that in case $f$ is small and $|A^{1/4}u_0|$ is small, then the solution $u$ is globally regular and additionally $|A^{1/4}u|$ remains small for all times. Thus we conclude that \eqref{datamodalcond} holds for sufficiently large $K$ when the solution is regular. However, it is unclear whether or not \eqref{datamodalcond} implies regularity, or even uniqueness, of the solution $u$. However, we show below that  this condition implies \emph{asymptotic uniqueness}, i.e., $K$ satisfying \eqref{datamodalcond} is \emph{asymptotically determining.}
\end{rem}
\begin{thm}  \label{thm:detmodes}
Let $u_1, u_2$ be two restricted Leray-Hopf weak solutions with $M_{K,u_i}, i=1,2$ defined as in \eqref{mkdef}.
Assume moreover that on $[0,\infty)$, we have
\be \label{detmodecond}
\lambda_K \gtrsim \max\left\{\frac{M_{K,u_1}^4}{\nu^4},\frac{M_{K,u_2}^4}{\nu^4},
 \lambda_1\right\},
\ee
where $M_{K,u_i}, i=1,2$ as defined in \eqref{mkdef} with 
$T=\infty$. If 
\be  \label{detmodecond1}
\lim_{t \ra \infty} |P_K(u_1-u_2)| =0,
\ee
then $\lim_{t \ra\infty} |u_1(t)-u_2(t)|=0$.
\end{thm}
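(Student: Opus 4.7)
The plan is to use the AOT algorithm itself as a bridge between $u_1$ and $u_2$. By hypothesis \eqref{detmodecond}, we may select a single nudging parameter $\mu$ that simultaneously satisfies the window \eqref{mumodalcond1} with $M_K$ replaced by either $M_{K,u_1}$ or $M_{K,u_2}$. For this choice of $\mu$ and $K$, Theorem \ref{thm:modalregularity} and Theorem \ref{thm:modalconvergence} yield two strong solutions $w_1, w_2$ of \eqref{nse1a} (with $I_h=P_K$, initialized at zero, and with observations drawn from $u_1$ and $u_2$ respectively) which are uniformly bounded in $V$ by $M_{K,u_1}$, $M_{K,u_2}$, and which satisfy $|w_i(t)-u_i(t)|\to 0$ as $t\to\infty$. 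By the triangle inequality, it suffices to show $|W(t)|\to 0$, where $W:=w_1-w_2$.

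Subtracting the AOT equations for $w_1$ and $w_2$ and using \eqref{bilindif} gives
\begin{equation*}
\partial_t W + \nu A W + B(W,w_1) + B(w_2,W) = -\mu P_K W + \mu P_K(u_1-u_2).
\end{equation*}
Since $w_1,w_2$ are strong on $[0,\infty)$, $|W|^2$ is a.e.\ differentiable and we may take the $H$-inner product with $W$. The $(B(w_2,W),W)$ term vanishes by orthogonality, and $(P_K W,W)=|P_KW|^2$. Following the template of the proofs of Theorems \ref{thm:modalregularity}--\ref{thm:modalconvergence}, I would estimate
\begin{equation*}
|(B(W,w_1),W)| \le \frac{c}{\nu^3}\|w_1\|^4 |W|^2 + \frac{\nu}{4}\|W\|^2,
\end{equation*}
decompose $|P_KW|^2 = |W|^2 - |Q_KW|^2 \ge |W|^2 - \lambda_K^{-1}\|W\|^2$, and bound $\mu|(P_K(u_1-u_2),W)| \le \mu|P_K(u_1-u_2)|^2 + (\mu/4)|W|^2$. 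Using $\|w_1\|\le M_{K,u_1}$ and the double-sided bound on $\mu$ in \eqref{mumodalcond1}, all terms involving $\|W\|^2$ are absorbed by $\nu\|W\|^2$ on the left and the coefficient of $|W|^2$ becomes $\ge \mu/2$, leaving
\begin{equation*}
\frac{d}{dt}|W|^2 + \frac{\mu}{2}|W|^2 \le 2\mu\, |P_K(u_1-u_2)|^2.
\end{equation*}

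Variation of constants yields
\begin{equation*}
|W(t)|^2 \le e^{-\mu t/2}|W(0)|^2 + 2\mu \int_0^t e^{-\mu(t-s)/2} |P_K(u_1-u_2)(s)|^2\, ds.
\end{equation*}
The first term decays. For the convolution, given $\varepsilon>0$ pick $T_\varepsilon$ so that $|P_K(u_1-u_2)(s)|^2 < \varepsilon$ for $s\ge T_\varepsilon$ (possible by \eqref{detmodecond1}); splitting the integral at $T_\varepsilon$ bounds it by $4\varepsilon + 2\mu e^{-\mu(t-T_\varepsilon)/2}\int_0^{T_\varepsilon}|P_K(u_1-u_2)|^2\,ds$, which is $< 5\varepsilon$ for all sufficiently large $t$ (note that $|P_K(u_1-u_2)|$ is locally bounded since each $u_i\in L^\infty_{\mathrm{loc}}([0,\infty);H)$). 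Hence $|W(t)|\to 0$. Combining with $|w_i-u_i|\to 0$ completes the proof.

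The main obstacle is structural rather than computational: one must verify that a single $\mu$ works for the parallel AOT systems built from $u_1$ and $u_2$, and that the convolution-type remainder genuinely vanishes despite the polynomial-in-$\mu$ prefactor; the latter relies on the exponential weight dominating over $[0,T_\varepsilon]$ once $t$ is large, which is why the splitting argument is essential.
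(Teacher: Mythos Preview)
Your proposal is correct and follows essentially the same route as the paper: construct the two AOT solutions $w_1,w_2$ with a common $\mu$, use Theorem~\ref{thm:modalconvergence} to get $|w_i-u_i|\to 0$, derive the differential inequality $\frac{d}{dt}|W|^2+\frac{\mu}{2}|W|^2\lesssim \mu|P_K(u_1-u_2)|^2$ for $W=w_1-w_2$ by the same estimates as in that theorem, and conclude via the triangle inequality. The only cosmetic difference is that the paper handles the forcing term by applying Gronwall on $[s,T]$ and taking $s$ large (so the sup over $[s,T]$ is small), whereas you use the equivalent Duhamel splitting at $T_\varepsilon$.
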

\begin{proof}
Let $w_1$ and $w_2$ be two strong solutions of the data assimilation equation \eqref{nse1a} corresponding to $u_1$ and $u_2$ for $\mu$ satisfying \eqref{mumodalcond1} for both $u_1$ and $u_2$.. 
Denote $\tw_i=w_i-u_i, i=1,2$. Then by Theorem \ref{thm:modalconvergence}, $\lim_{t \ra \infty}|\tw_i|=0$. 
Let $\tw=w_1-w_2$. Proceeding exactly as in the proof of Theorem \ref{thm:modalconvergence}, and noting that $\|w_i\| \le M_K$ and that 
$\mu$ satisfies \eqref{mumodalcond1}, we conclude
\[
\dt |\tw|^2 +\frac{\mu}{2}|\tw|^2 \le \mu|P_K(u_2-u_1)|^2,
\]
which yields, upon integrating between $s$ to $T$ that
\[
|\tw(T)|^2 \lesssim e^{- \frac{\mu}{2}(T-s)}|\tw(s)|^2 
+ \sup_{t \in [s,T]} |P_K(u_2-u_1)|.
\]
Letting $T \ra \infty$ and using \eqref{detmodecond1}, we conclude that $\lim_{t \ra \infty}|\tw|=0$. Thus, $\lim_{t \ra \infty}|u_1-u_2|=0$.

\end{proof}
\begin{rem}
A different notion of a time-varying determining wave number for two Leray-Hopf weak solutions, based on the Littlewood-Paley decomposition and applicable only to the space periodic setting, is provided in \cite{chedai}. However, the time varying determining wave number defined there is not asymptotically determining in our sense. To be precise, their result requires the two solutions to belong to the global attractor, and moreover, the low Fourier modes, as defined by this time varying wave number, must be identical for all times for the solutions to coincide. Thus they are not asymptotically determining.
 \end{rem}

\subsubsection{General Type 1 Interpolant}
\hspace{\fill}\\

In this section, we assume that $u$ and $w$ satisfy the space periodic boundary condition. Thus, the Stokes operator 
$A=(-\Delta)$ on $V=H^1(\Om)$. Moreover, 
 we also assume that in addition to \eqref{type1}, $I_h$ also satisfies the condition $Ran(I_h)\subset V=H^1(\Om)$ and
\be  \label{type1a}
 \|I_h v\|\le C \|v\|\ \forall v \in V.
\ee
This is clearly satisfied for the modal interpolant $P_K$. In case of the volume interpolant, one may  apply a suitable mollification procedure to obtain a modified volume interpolant,  $\tilde{I}_h$, that satisfies both \eqref{type1} and \eqref{type1a}. In fact, this is achieved by replacing the term $\left(\chi_{Q_j}(x)-\frac{h^3}{L^3}\right)$
by $\lf(\psi_j(x) - \int_{\Om} \psi_j (y)dy\rg)$, where 
$\psi_j= \rho_\epsilon \ast \chi_{Q_j}$ for
\[  \rho(\xi)= \left\{
\begin{array}{ll}
      K_0 \exp\left(\frac{1}{1-|\xi|^2}\right) & for\ |\xi| < 1 \\
      0 & for\ |\xi|\ge 1 \\
\end{array} 
\right. \]
and 
\[
(K_0)^{-1}=\int\limits_{|\xi|<1} \exp\left(\frac{1}{1-|\xi|^2}\right)d\xi
\]
The mollification parameter $\epsilon$ is chosen to be a fraction of h. $\tilde{I}_h$ is a $C^{\infty}$ function and it can be show that it satisfies (\ref{type1a}). For more details see the appendix of \cite{AzouaniOlsonTiti2014} which proves the corresponding result for the type 2 case.

\begin{thm}\label{generalh1bound}
Assume that $u$ as in \eqref{nse1} and $w$ as in \eqref{nse1a} satisfy the space periodic boundary condition.
Suppose $I_h$ is a general type 1 interpolant and satisfies (\ref{type1}). Let $0<T\le \infty$ and denote
\be\label{Mhdef}
M_h^2=M_{h,u}^2:= 8\left(\frac{1}{\nu^2\lambda_1} |f|^2 +  \sup_{t\in [0,T)}\|I_h(u)\|^2\right).
\ee
Assume that
\be\label{mucondg}
\max\{\frac{2cM_h^4}{\nu^3}, \nu\lambda_1\}\le \mu \le \frac{ \nu}{4ch^2}.
\ee
 Then any  weak solution of \eqref{nse1a} constructed in Theorem \ref{thm:weakexistence} as a subsequential limit of $w_N$ satisfying \eqref{nse1aa} is regular on $[0,T]$, i.e., it satisfies
\be   \label{wh1boundtype1}
\|w(t)\| \le M_h, t \in [0,T].
\ee
\end{thm}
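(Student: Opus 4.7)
The plan is to mimic the strategy used for the modal case in Theorem \ref{thm:modalregularity}, working with the Galerkin system \eqref{nse1aa}, deriving an \emph{a priori} bound on $\|w_N\|$ via a continuity argument, and then passing to the limit. The key algebraic trick carries over: split the nudging term as
\[
-\mu I_h(w_N-u)=-\mu(I_h w_N-w_N)-\mu w_N+\mu I_h u,
\]
so that taking the inner product of \eqref{nse1aa} with $Aw_N$ yields an extra good term $-\mu\|w_N\|^2$ (here I use the space-periodic setting so that $(w_N,Aw_N)=\|w_N\|^2$).

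First I would take the $H$--inner product of \eqref{nse1aa} with $Aw_N$ to get, analogously to \eqref{eq888}, the identity
\[
\tfrac12\dt\|w_N\|^2+\nu|Aw_N|^2+\mu\|w_N\|^2
=(f,Aw_N)-(B(w_N,w_N),Aw_N)-\mu(I_h w_N-w_N,Aw_N)+\mu(I_h u,Aw_N).
\]
Three of the four right-hand terms are handled exactly as in the modal proof: Cauchy--Schwarz/Young gives $|(f,Aw_N)|\le \tfrac{1}{\nu}|f|^2+\tfrac{\nu}{4}|Aw_N|^2$; the bound \eqref{bilinear3} gives $|(B(w_N,w_N),Aw_N)|\le \tfrac{c}{\nu^3}\|w_N\|^6+\tfrac{\nu}{4}|Aw_N|^2$; and the new property \eqref{type1} (second inequality) yields
\[
\mu|(I_h w_N-w_N,Aw_N)|\le \mu c h\|w_N\||Aw_N|\le \tfrac{\mu^2 c^2 h^2}{\nu}\|w_N\|^2+\tfrac{\nu}{4}|Aw_N|^2\le \tfrac{\mu}{4}\|w_N\|^2+\tfrac{\nu}{4}|Aw_N|^2,
\]
where the last step uses the upper bound on $\mu$ in \eqref{mucondg}.

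The step that truly requires the extra hypothesis \eqref{type1a} is the estimate on $\mu(I_h u,Aw_N)$. Since $I_h u\in V=H^1(\Omega)$ by \eqref{type1a}, we may integrate by parts in the periodic setting to obtain $(I_h u,Aw_N)=((I_h u,w_N))$, after which
\[
\mu|((I_h u,w_N))|\le \mu\|I_h u\|\,\|w_N\|\le \mu\|I_h u\|^2+\tfrac{\mu}{4}\|w_N\|^2.
\]
Substituting all four bounds back produces the differential inequality
\[
\dt\|w_N\|^2+\Bigl(\mu-\tfrac{c}{\nu^3}\|w_N\|^4\Bigr)\|w_N\|^2\le \tfrac{2}{\nu}|f|^2+2\mu\|I_h u\|^2,
\]
which is the exact analogue of \eqref{dblnormestmodal} with $\|P_K u\|$ replaced by $\|I_h u\|$.

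Finally I would run the bootstrap/continuity argument verbatim: let $[0,T_1]$ be the maximal subinterval of $[0,T]$ on which $\|w_N(t)\|\le M_h$, note $T_1>0$ since $w_N(0)=0$, and assume for contradiction that $T_1<T$. On $[0,T_1]$ the definition of $M_h$ in \eqref{Mhdef} and the lower bound on $\mu$ in \eqref{mucondg} give $\mu-\tfrac{c}{\nu^3}\|w_N\|^4\ge \tfrac{\mu}{2}$, so Gronwall yields
\[
\|w_N(t)\|^2\le \tfrac{4}{\nu^2\lambda_1}|f|^2+4\sup_{s\in[0,T)}\|I_h u(s)\|^2\le \tfrac12 M_h^2,
\]
contradicting $\|w_N(T_1)\|=M_h$. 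Hence $\|w_N\|\le M_h$ on $[0,T]$ for every sufficiently large $N$, and passing to the weak limit along the subsequence from Theorem \ref{thm:weakexistence} gives \eqref{wh1boundtype1}. The main (and essentially only) obstacle compared with the modal case is controlling $\mu(I_h u, Aw_N)$ without the luxury of $P_K$ commuting with $A$; this is precisely why the extra hypothesis \eqref{type1a} was introduced, and its invocation is the one genuinely new ingredient in the argument.
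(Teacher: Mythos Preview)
Your proposal is correct and follows essentially the same approach as the paper: the same splitting of the nudging term, the same four estimates on $(f,Aw_N)$, $(B(w_N,w_N),Aw_N)$, $\mu(I_hw_N-w_N,Aw_N)$ (via \eqref{type1} and the upper bound in \eqref{mucondg}), and $\mu(I_hu,Aw_N)$ (via integration by parts using \eqref{type1a} and periodicity), leading to the same differential inequality and continuity/Gronwall argument. Your identification of the $\mu(I_hu,Aw_N)$ term as the one place where \eqref{type1a} and the periodic setting are genuinely needed matches the paper exactly.
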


\begin{proof}
We proceed as in Theorem \ref{thm:modalregularity} by rearranging (\ref{nse1aa})
\[
\frac{d w_N}{d t}+\nu Aw_N + P_N B(w_N,w_N)=f-\mu P_N(I_h(w_N)-w_N) + \mu P_N I_h(u) - \mu w_N
\]
and taking the inner product with $Aw_N$
\bea
\lefteqn{\frac{1}{2}\frac{d}{dt}\| w_N \| ^2 + \nu|Aw_N|^2 + 
\mu \|w_N\|^2 }\nn \\
& & = (f,Aw_N) -(B(w_N,w_N),Aw_N) \nn \\
& & \qquad \qquad  -\mu (I_h(w_N)-w_N,Aw_N) + \mu(I_h(u),Aw_N).
\label{eq8888alt}
\eea
Each term on the right hand side is estimated below as in Theorem \ref{thm:modalregularity}. First, we have by Cauchy-Schwartz and Young's inqualities,
\[
|(f,Aw_N)|\le |f||Aw| \le \frac{1}{\nu}|f|^2 + \frac{\nu}{4} 
|Aw_N|^2.
\]
Next, the nonlinear term is estimated using \eqref{bilinear3}
and Young's inequalities as
\[
|(B(w_N,w_N),Aw_N)|\le c\|w_N\|^{3/2}|Aw_N|^{3/2}\le \frac{c}{\nu^3} \|w_N\|^6 + \frac{\nu}{4} |Aw_N|^2.
\]
Next, using \eqref{type1a}, 
\begin{align*}
&\mu|I_h(w_N)-w_N,Aw_N)|\le \mu|I_h(w_N)-w_N||Aw_N| \\
& \le \mu c h \|w_N\||Aw_N|
 \le \frac{(\mu ch)^2}{\nu}\|w_N\|^2 + \frac{\nu}{4}|Aw_N|^2\\
& \le \frac{\mu}{4}\|w_N\|^2 + \frac{\nu}{4}|Aw_N|^2,
\end{align*}
where to obtain the last inequality, we used \eqref{mucondg}.
Observe now that since $A=(-\Delta)$ in the space periodic case, we can integrate by parts to obtain
\[
\mu|(I_h(u),Aw_N)|\le \mu \|I_h(u)\|\|w_N\| \le \mu\|I_h(u)\|^2 + \frac{\mu}{4}\|w_N\|^2.
\]
Inserting the above estimates into (\ref{eq8888alt}) we obtain,
\[
\frac{d}{dt}\|w_N\|^2 + (\mu-\frac{c}{\nu^3} \|w_N\|^4)\|w_N\|^2 \le \frac{2}{\nu}|f|^2 +2\mu\|I_h(u)\|^2.
\]
Let $[0,T_1]$ be the maximal interval on which $\|w_N(t)\|\le M_h$ holds for $t\in[0,T_1]$ where $M_h$ as in (\ref{Mhdef}).
 Note that $T_1$ exists because we have $w_N(0)=0$. Assume that $T_1 < T$. Using the lower bound for $\mu$ in (\ref{mucondg}) and the Gronwall inequality we obtain
\[
\|w_N\|^2 \le \frac{4}{\nu^2\lambda_1} |f|^2 + 4\sup_{s\in[0,T]} \|I_h(u(s))\|^2=\frac{1}{2}M_h^2\ \forall\  t\in[0,T_1].
\]
Arguing as in Theorem \ref{thm:modalregularity} by contradiction, we obtain the desired conclusion for $w$, i.e., $\|w(t)\|\le M_h$ for all $t\in[0,T]$.
\end{proof}

We now can deduce  the following result regarding the tracking property of $w$.
\begin{thm}\label{generalconvergence}
Assume that the hypotheses of Theorem \ref{generalh1bound} hold. Let $\tw =w-u$. Then $|\tw (t)|^2 \le e^{\frac{-\mu}{2} t}|\tw (0)|^2$ for all $t\in[0,T]$. In particular, if in the statement of Theorem \ref{generalh1bound}, we have $T=\infty$, then
\[
\lim_{t\to\infty}|\widetilde{w}(t)|^2=0.
\]
\end{thm}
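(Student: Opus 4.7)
The plan is to mirror the proof of Theorem \ref{thm:modalconvergence}, working at the Galerkin level with $\tw_N := w_N - u_N$ and passing to the limit; the Galerkin step is necessary because $u$ is only a restricted Leray--Hopf weak solution, so $|\tw(t)|^2$ is not a priori differentiable, whereas $|\tw_N(t)|^2$ is.

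First I would subtract the Galerkin system for $u_N$ from \eqref{nse1aa} and split the interpolant term as $I_h(w_N - u) = I_h\tw_N + I_h(u - u_N)$, then take the $L^2$ inner product with $\tw_N \in H_N$. The term $(B(u_N,\tw_N),\tw_N)$ vanishes by antisymmetry, and the remaining convective contribution $(B(\tw_N, w_N), \tw_N)$ is estimated via \eqref{bilinear4} by $c\nu^{-3}\|w_N\|^4|\tw_N|^2 + (\nu/2)\|\tw_N\|^2$; the $H^1$ bound $\|w_N\| \le M_h$ from the Galerkin a priori estimate used in the proof of Theorem \ref{generalh1bound}, combined with the lower bound $\mu \ge 2cM_h^4/\nu^3$ from \eqref{mucondg}, then makes the low-order part at most $(\mu/2)|\tw_N|^2$. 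The principal interpolant term is decomposed as $-\mu(I_h \tw_N, \tw_N) = -\mu|\tw_N|^2 - \mu(I_h \tw_N - \tw_N, \tw_N)$, supplying a coercive $+\mu|\tw_N|^2$ on the left; its correction is bounded by $\mu ch\|\tw_N\||\tw_N|$ via \eqref{type1} and, using the upper bound $\mu \le \nu/(4ch^2)$ from \eqref{mucondg}, is absorbed by Young's inequality exactly as in the corresponding step of the proof of Theorem \ref{generalh1bound}. Finally, the data-mismatch term $\mu(I_h(u - u_N), \tw_N)$ is crudely bounded by $\mu|I_h(u - u_N)|^2 + (\mu/4)|\tw_N|^2$. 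Collecting the estimates yields
\[
\dt |\tw_N|^2 + \frac{\mu}{2}|\tw_N|^2 \le 2\mu |I_h(u - u_N)|^2,
\]
and integration via Gronwall, with $\tw_N(0) = -P_N u(0)$, gives
\[
|\tw_N(t)|^2 \le e^{-\mu t/2}|\tw_N(0)|^2 + 2\mu \int_0^t e^{-\mu(t-s)/2}|I_h(u - u_N)(s)|^2\,ds.
\]

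The main obstacle is the limit of the forcing term, which is the principal point of departure from Theorem \ref{thm:modalconvergence}. In the modal case $I_h = P_K$ is finite rank with range in $V$, so $C([0,T]; V')$-convergence of $u_N$ immediately yields $\sup_{t \in [0,T]}|P_K(u - u_N)| \to 0$; no such uniform-in-time bound is available for a general type-1 interpolant. Instead I would use the $H$-boundedness $|I_h(u - u_N)| \le c|u - u_N|$ from \eqref{type1} together with the strong convergence $u_N \to u$ in $L^2([0,T]; H)$ from Theorem \ref{thm:Leray} to conclude that $\int_0^T |I_h(u - u_N)(s)|^2\,ds \to 0$ as $N \to \infty$; this suffices to make the convolution integral vanish uniformly in $t \in [0,T]$.

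To close the argument I pass to the limit along the Galerkin subsequence of Theorem \ref{thm:weakexistence}. The uniform $L^\infty([0,T]; H)$ bound on $\tw_N$ together with $\tw_N(t) \to \tw(t)$ in $V'$ for each $t$ yields $\tw_N(t) \rightharpoonup \tw(t)$ weakly in $H$; weak lower semicontinuity of $|\cdot|$, together with $|\tw_N(0)| = |P_N u(0)| \to |u(0)| = |\tw(0)|$, then gives $|\tw(t)|^2 \le e^{-\mu t/2}|\tw(0)|^2$ for every $t \in [0,T]$. The $T = \infty$ case follows immediately from letting $t \to \infty$ in this bound.
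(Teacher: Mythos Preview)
Your proof is correct and follows the same skeleton as the paper: derive a differential inequality for $|\tw_N|^2$ at the Galerkin level via \eqref{bilinear4}, the type-1 bound \eqref{type1}, and the condition \eqref{mucondg}, apply Gronwall, and pass to the limit using weak lower semicontinuity of the $H$-norm. The estimates you outline for the convective, interpolant-correction, and data-mismatch terms coincide with the paper's.

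The one genuine point of departure is the treatment of the residual $|I_h(u-u_N)|^2$ in the limit. The paper bounds the Gronwall output by $2\sup_{[0,T]}|I_h(u-u_N)|^2$ and argues that this vanishes from the finite-rank structure of $I_h$ together with $u_N\to u$ in $C([0,T];V')$; this step implicitly requires that the \emph{dual} functionals defining $I_h$ (not merely a basis of its range) lie in $V$, which is not part of the type-1 hypotheses and fails, for instance, for the unmollified volume-element interpolant. You instead keep the convolution form of Gronwall and use only the first inequality in \eqref{type1}, $|I_h v|\le c|v|$, together with the strong $L^2([0,T];H)$ convergence of $u_N$ to $u$ guaranteed by the restricted Leray--Hopf framework. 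Your route thus relies on strictly weaker information about $I_h$ and is more robust for general type-1 interpolants; what the paper's route buys, when it applies, is a pointwise-in-$t$ (rather than $L^2_t$) decay of the residual.
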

\begin{proof}
$\widetilde{w}_N$ satisfies the following,
\begin{align*}
\frac{d\tw_N}{dt} + vA\tw_N + P_N B(w_N,w_N)-P_NB(u_N,u_N) = -\mu P_N I_h(\tw_N)  + \mu P_N I_h(u-u_N)
\end{align*}
which can be rearranged to
\begin{align*}
\frac{d\tw_N}{dt} + vA\tw_N + P_NB(\tw_N,w_N)+P_NB(u_N,\tw_N) &= -\mu(I_h(\tw_N)-\tw_N) -\mu \tw_N\\
&+ \mu P_N I_h(u-u_N).
\end{align*}
We take the inner product with $\tw_N$
\begin{align}\label{eq88888}
\frac{1}{2}\frac{d}{dt}|\tw_N|^2 + \nu\|\tw_N\|^2 + \mu |\tw_N|^2 &= -(B(\tw_N,w_N),\tw_N) -\mu(I_h(\tw_N)-\tw_N,\tw_N)\nn \\
&+ \mu (I_h(u-u_N),\tw_N)
\end{align}
and estimate each term on the right hand side as follows:
\begin{align*}
&|(B(\tw_N,w_N),\tw_N)|\le c|\tw_N|^{1/2} \|w_N\| \|\tw_N\|^{3/2}\le \frac{c}{\nu^3}\|w_N\|^4|\tw_N|^2 + \frac{\nu}{2}\|\tw_N\|^2\\
& \mu|(I_h(\tw_N)-\tw_N,\tw_N)|\le \mu |I_h(\tw_N)-\tw_N||\tw_N|\le \mu ch\|\tw_N\| |\tw_N|\\
& \ \ \ \ \ \ \ \ \ \ \ \ \ \ \ \ \ \ \ \ \ \ \ \ \ \ \ \ \ \le \mu c^2h^2\|\tw_N\|^2 + \frac{\mu}{4} |\tw_N|^2 \le \frac{\nu}{2}\|\tw_N\|^2 + \frac{\mu}{4}|\tw_N|^2\\
&\mu|( I_h(u-u_N),\tw_N)|\le \mu|I_h(u-u_N)||\tw_N|\le \mu |I_h(u-u_N)|^2 + \frac{\mu}{4}|\tw_N|^2.
\end{align*}
Inserting the estimates into (\ref{eq88888}),
\[
\frac{d}{dt}|\tw_N|^2 + (\mu-\frac{c}{\nu^3}\|w_N\|^4) |\tw_N|^2 \le \mu |I_h(u-u_N)|^2
\]
and since $\mu$ satisfies (\ref{mucondg}) we get
\[
\frac{d}{dt}|\tw_N|^2 + \frac{\mu}{2} |\tw_N|^2 \le \mu |I_h(u-u_N)|^2.
\]
Applying Gronwall, for all $t\in[0,T]$, we get
\[
|\tw_N(t)|^2\le e^{-(\mu/2)t}|\tw(0)|^2 + 2\sup_{t\in[0,t]}|I_h(u-u_N)|^2.
\]
Recall $u_N\to u$ in $C(0,T;V')$ and the range of $I_h$ is a finite dimensional vector space with a basis $\{\psi\}$. Therefore,
\[
\liminf_{N\to\infty}|I_h(u-u_N)|\le \liminf_{N\to\infty}\sum\limits_{i=1}^{K'} |(u-u_N,\psi_i)|=0.
\]
Since $\tw_N$ converges to $\tw$ weakly,
\[
|\tw(t)|^2\le e^{-(\mu/2)t}|\tw(0)|^2
\]
which proves the result.
\end{proof}

\begin{rem}
A choice of $\mu$ satisfying \eqref{mucondg} exists provided the condition
\be  \label{type1mucond}
\max\{\frac{2cM_h^4}{\nu^4}, \lambda_1\}\lesssim \frac{ 1}{h^2}
\ee
holds. Due to \eqref{type1a}, this is clearly satisfied for sufficiently small $h$ if $u$ is regular and 
$\sup_{t \in [t_0,\infty)} \|u\| < \infty $. Thus global regularity and uniform boundedness in $V$ of $u$ guarantees the existence of a globally regular solution for the AOT algorithm \eqref{nse1a} and the unique solution $w$ tracks $u$.
\end{rem}
We now show that Theorem \ref{generalconvergence} implies  the existence of asymptotically determining volume elements, 
similar to the modal case.

\begin{thm}  \label{thm:detmodesgeneral}
Let $u_1, u_2$ be two restricted Leray-Hopf weak solutions with $M_{h,u_i}, i=1,2$ defined as in \eqref{Mhdef}.
Assume moreover that on $[0,\infty)$, we have
\be \label{detmodecondgeneral}
h^{-2} \gtrsim \max\left\{\frac{M_{h,u_1}^4}{\nu^4},\frac{M_{h,u_2}^4}{\nu^4},
 \lambda_1\right\},
\ee
where $M_{h,u_i}, i=1,2$ as defined in \eqref{Mhdef} with 
$T=\infty$. If 
\be  \label{detmodecond1g}
\lim_{t \ra \infty} |I_h(u_1-u_2)| =0,
\ee
then $\lim_{t \ra\infty} |u_1(t)-u_2(t)|=0$.
\end{thm}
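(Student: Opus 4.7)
My plan is to mirror the argument of Theorem \ref{thm:detmodes}, replacing the modal interpolant $P_K$ by the general type 1 interpolant $I_h$ throughout. The hypothesis \eqref{detmodecondgeneral} is precisely what is needed so that a single value of the nudging parameter $\mu$ simultaneously satisfies the condition \eqref{mucondg} with $M_h$ replaced by $M_{h,u_1}$ and by $M_{h,u_2}$; fix such a $\mu$ once and for all.

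First, for $i=1,2$, I would introduce the AOT data assimilation system \eqref{nse1a} with interpolant $I_h$ driven by $u_i$ and zero initial data, and let $w_i$ be the weak solution produced by Theorem \ref{thm:weakexistence}. By Theorem \ref{generalh1bound} applied with $T=\infty$, each $w_i$ is regular on $[0,\infty)$ with $\|w_i(t)\| \le M_{h,u_i}$ for all $t \ge 0$, so in particular $w_i$ is the unique strong solution. By Theorem \ref{generalconvergence}, $|w_i(t) - u_i(t)| \to 0$ as $t \to \infty$.

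Next, set $\tw = w_1 - w_2$. Subtracting the two AOT equations and using the identity \eqref{bilindif}, $\tw$ satisfies
\begin{equation*}
\dt \tw + \nu A\tw + B(\tw,w_1) + B(w_2,\tw) = -\mu I_h(\tw) + \mu I_h(u_2-u_1).
\end{equation*}
Taking the inner product with $\tw$, using $\langle B(w_2,\tw),\tw\rangle = 0$, estimating the nonlinear term via \eqref{bilinear4} as $|(B(\tw,w_1),\tw)| \le (c/\nu^3)\|w_1\|^4 |\tw|^2 + (\nu/2)\|\tw\|^2$, and handling the two $I_h$ terms exactly as in the proof of Theorem \ref{generalconvergence} (using \eqref{type1} to absorb $\|\tw\|^2$ into the dissipation by the upper bound on $\mu$), I obtain
\begin{equation*}
\dt |\tw|^2 + \Bigl(\mu - \tfrac{c}{\nu^3}\|w_1\|^4\Bigr) |\tw|^2 \le \mu |I_h(u_2-u_1)|^2.
\end{equation*}
Since $\|w_1\| \le M_{h,u_1}$ and $\mu$ satisfies the lower bound in \eqref{mucondg} for $u_1$, the coefficient of $|\tw|^2$ on the left is at least $\mu/2$.

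Finally, I would apply Gronwall's inequality on an interval $[s,T]$ to obtain
\begin{equation*}
|\tw(T)|^2 \le e^{-\mu(T-s)/2}|\tw(s)|^2 + 2 \sup_{t \in [s,T]} |I_h(u_1-u_2)(t)|^2.
\end{equation*}
Letting $T \to \infty$ for fixed $s$ and then $s \to \infty$, together with the hypothesis \eqref{detmodecond1g}, yields $\lim_{t \to \infty} |\tw(t)| = 0$. Combined with $|w_i - u_i| \to 0$ from Theorem \ref{generalconvergence} and the triangle inequality $|u_1 - u_2| \le |u_1-w_1| + |\tw| + |w_2-u_2|$, this gives the conclusion. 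The only subtlety is the simultaneous choice of $\mu$ for both solutions, which is guaranteed by \eqref{detmodecondgeneral}; the energy estimate itself is a routine adaptation of the one already performed in Theorem \ref{generalconvergence}, so no serious obstacle is anticipated.
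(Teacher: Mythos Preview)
Your proposal is correct and follows essentially the same route as the paper: introduce the two AOT solutions $w_1,w_2$ with a common $\mu$ (guaranteed by \eqref{detmodecondgeneral}), invoke Theorems \ref{generalh1bound} and \ref{generalconvergence} to get $\|w_i\|\le M_{h,u_i}$ and $|w_i-u_i|\to 0$, derive the differential inequality for $\tw=w_1-w_2$ exactly as in Theorem \ref{generalconvergence}, and conclude via Gronwall and the triangle inequality. Your write-up is in fact slightly more careful than the paper's in the final limiting step (first $T\to\infty$, then $s\to\infty$); the only blemish is a harmless sign slip in the forcing term, which should read $+\mu I_h(u_1-u_2)$ rather than $+\mu I_h(u_2-u_1)$.
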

\begin{proof}
Let $w_1$ and $w_2$ be two strong solutions of the data assimilation equation \eqref{nse1a} corresponding to $u_1$ and $u_2$ for $\mu$ satisfying \eqref{mucondg} for both $u_1$ and $u_2$.. 
Denote $\tw_i=w_i-u_i, i=1,2$. Then by Theorem \ref{generalconvergence}, $\lim_{t \ra \infty}|\tw_i|=0$. 
Let $\tw=w_1-w_2$. Proceeding exactly as in the proof of Theorem \ref{generalconvergence}, and noting that $\|w_i\| \le M_h$ and that 
$\mu$ satisfies \eqref{mucondg}, we conclude
\[
\dt |\tw|^2 +\frac{\mu}{2}|\tw|^2 \le \mu|I_h(u_2-u_1)|^2,
\]
which yields, upon integrating between $s$ to $T$ that
\[
|\tw(T)|^2 \lesssim e^{- \frac{\mu}{2}(T-s)}|\tw(s)|^2 
+ \sup_{t \in [s,T]} |I_h(u_2-u_1)|.
\]
Letting $T \ra \infty$ and using \eqref{detmodecond1g}, we conclude that $\lim_{t \ra \infty}|\tw|=0$. Thus, $\lim_{t \ra \infty}|u_1-u_2|=0$.

\end{proof}

\section{Adaptive Algorithm}  \label{sec:adaptive}
Since \eqref{nse1a} becomes stiff for larger values of $\mu$, in this section we define an adaptive algorithm so that the value of the nudging parameter can be adjusted to a higher value only in the time intervals where the flow is turbulent.
 This data assimilation algorithm is iteratively defined by
\be \label{nse1g}
\begin{split}
\frac{d w}{d t}+\nu Aw + B(w,w)=f-\mu_{k+1}P_N(w-u),\ t\in(T_k,T_{k+1}]\\
 \nabla\cdot w=0,\\
 w(T_k)=\lim_{t\to T_k} w(t),\ w(0)=0.
 \end{split}
 \ee
 for $k\in \{0,1,...,j\}$.
\begin{thm}\label{wboundg}
Suppose $I_h=P_K$ is a modal interpolant and satisfies (\ref{type1k}). Denote for $k\in \{0,1,...,j\}$

\be\label{Mkitdef}
M_{k+1}^2=\max\bigg\{\|w(T_k)\|^2, \left(\frac{4|f|^2}{\nu^2\lambda_1} + 2\widetilde{M}_{k+1}^2\right)\bigg\}
\ee
and
\be\label{Mktildedef}
\widetilde{M}_{k+1}=\sup_{t\in[T_k,T_{k+1}]} \|P_K u(t)\|.
\ee
Assume that
\be\label{adaptiveassumption}
\sup_{t\in[T_0,T_{j+1}]} \left(\frac{32|f|^4}{\nu^4\lambda_1^2} + 8\|P_K u(t)\|^4\right) \lambda_K^{-1} \le \frac{\nu^4}{16c}
\ee
and 
\be\label{adaptivemucond}
 \max\big\{\frac{2c}{\nu^3}M_{k}^4,\nu\lambda_1\big\}\le \mu_{k}\le \frac{\nu\lambda_N}{8},\ k\in \{0,1,...,j+1\}.
\ee
 Then the solution of (\ref{nse1g}) satisfies
\[
\|w(t)\|^2 \le \frac{4|f|^2}{\nu^2\lambda_1} + 2\sup_{t\in[T_0,T_{j+1}]} \|P_K u(t)\|^2\le \frac{\nu^2\lambda_K^{1/2}}{4c}\ \ \forall t\in[T_0,T_{j+1}].
\]
\end{thm}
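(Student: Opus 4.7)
The plan is to prove the theorem by induction on $k \in \{0, 1, \ldots, j\}$, establishing simultaneously that $\|w(t)\| \le M_{k+1}$ on each subinterval $[T_k, T_{k+1}]$ and that $\|w(T_{k+1})\|^2$ continues to respect the absolute (i.e., $k$-independent) bound $\frac{4|f|^2}{\nu^2\lambda_1} + 2\sup_{[T_0,T_{j+1}]}\|P_K u\|^2$. The base case is trivial since $w(T_0)=w(0)=0$. For the inductive step, I would assume this absolute bound holds at $t=T_k$; by the definition \eqref{Mkitdef}, this immediately forces $M_{k+1}^2 \le \frac{4|f|^2}{\nu^2\lambda_1} + 2\sup_{[T_0,T_{j+1}]}\|P_K u\|^2$ as well.

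On $(T_k, T_{k+1}]$ the system \eqref{nse1g} is (modulo the choice of nudging parameter $\mu_{k+1}$ and a nonzero initial datum $w(T_k)$) structurally identical to \eqref{nse1a} with modal interpolant $P_K$. I would therefore work with the Galerkin truncation and repeat the $V$-level energy calculation of Theorem \ref{thm:modalregularity}: pairing with $Aw_N$, using \eqref{bilinear3}, \eqref{type1k}, and the upper bound on $\mu_{k+1}$ from \eqref{adaptivemucond} (to absorb the $\mu_{k+1}^2/(\nu\lambda_K)$ term from the interpolation defect into $\mu_{k+1}/4$), gives
\[
\frac{d}{dt}\|w_N\|^2 + \left(\mu_{k+1} - \frac{c}{\nu^3}\|w_N\|^4\right)\|w_N\|^2 \le \frac{2}{\nu}|f|^2 + 2\mu_{k+1}\widetilde{M}_{k+1}^2.
\]
Define $T_\ast \in [T_k, T_{k+1}]$ to be maximal such that $\|w_N(t)\| \le M_{k+1}$ on $[T_k,T_\ast]$; this set is nontrivial by continuity since $\|w_N(T_k)\| \le M_{k+1}$ by definition of $M_{k+1}^2$. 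On $[T_k,T_\ast]$, the lower bound $\mu_{k+1} \ge 2cM_{k+1}^4/\nu^3$ from \eqref{adaptivemucond} yields $\mu_{k+1} - c\|w_N\|^4/\nu^3 \ge \mu_{k+1}/2$.

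Applying Gronwall on this interval with $\mu_{k+1} \ge \nu\lambda_1$ gives
\[
\|w_N(t)\|^2 \le e^{-\mu_{k+1}(t-T_k)/2}\|w_N(T_k)\|^2 + \left(1 - e^{-\mu_{k+1}(t-T_k)/2}\right)\left(\frac{4|f|^2}{\nu^2\lambda_1} + 4\widetilde{M}_{k+1}^2\right),
\]
a convex combination that stays below $M_{k+1}^2$ by construction. A continuity contradiction argument (as in Theorem \ref{thm:modalregularity}) then forces $T_\ast = T_{k+1}$, so the bound $\|w_N(t)\| \le M_{k+1}$ persists through the full subinterval. Passing to the weak/strong Galerkin limit gives $\|w(t)\| \le M_{k+1}$ on $[T_k,T_{k+1}]$, and in particular $\|w(T_{k+1})\|^2$ inherits the absolute bound, closing the induction. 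The final inequality $\frac{4|f|^2}{\nu^2\lambda_1} + 2\sup_{[T_0,T_{j+1}]}\|P_K u\|^2 \le \frac{\nu^2\lambda_K^{1/2}}{4c}$ is a direct algebraic consequence of \eqref{adaptiveassumption} via the elementary inequality $(a+b)^2 \le 2(a^2+b^2)$.

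The main obstacle is bookkeeping the interaction between the recursively defined quantity $M_{k+1}^2$ (which explicitly involves the prior endpoint value $\|w(T_k)\|^2$) and the sought-after $k$-independent absolute bound: one must simultaneously propagate the absolute bound and verify at each step that the iteration-dependent lower bound $2cM_{k+1}^4/\nu^3$ on $\mu_{k+1}$ remains compatible with the iteration-dependent upper bound $\nu\lambda_N/8$, which is precisely the content of hypothesis \eqref{adaptiveassumption}. A secondary technical point is ensuring the limit transition at each subinterval joint $t=T_k$ is consistent with the continuity requirement $w(T_k)=\lim_{t \to T_k^-} w(t)$, but this follows from the strong continuity in $H$ inherited from the weak-solution construction in Theorem \ref{thm:weakexistence}.
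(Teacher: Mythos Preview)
Your approach is essentially the same as the paper's: work interval-by-interval, derive the $V$-level differential inequality by pairing with $Aw$, bootstrap via the maximal-time argument, apply Gronwall, and then observe that the resulting convex-combination bound propagates the global bound inductively. The paper compresses the induction into a post-proof remark and works directly with $w$ rather than the Galerkin truncation, but structurally the arguments coincide.

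There is, however, a constant-tracking issue in your sketch that makes the bootstrap fail as written. If you literally repeat the estimates of Theorem~\ref{thm:modalregularity}, the interpolation-defect term is bounded as $\mu_{k+1}|(P_Kw-w,Aw)|\le \frac{\mu_{k+1}}{4}\|w\|^2+\frac{\nu}{4}|Aw|^2$, which forces the $(P_Ku,Aw)$ term to be split with only $\frac{\mu_{k+1}}{4}\|w\|^2$. After doubling and applying Gronwall you then obtain $\frac{4|f|^2}{\nu^2\lambda_1}+4\widetilde M_{k+1}^2$, exactly as you wrote. But $M_{k+1}^2$ in \eqref{Mkitdef} contains only $2\widetilde M_{k+1}^2$, so your convex combination does \emph{not} stay below $M_{k+1}^2$, and the continuity argument does not close. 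The paper avoids this by exploiting the exact identity $(Q_Kw,Aw)=\|Q_Kw\|^2\le \lambda_K^{-1}|Aw|^2$ for the modal case; this places the entire interpolation defect into the $|Aw|^2$ budget, freeing up $\frac{\mu_{k+1}}{2}\|w\|^2$ for the $(P_Ku,Aw)$ split and producing $\frac{\mu_{k+1}}{2}\|P_Ku\|^2$ on the right. Gronwall then gives the required $2\widetilde M_{k+1}^2$. Once you make this adjustment, your argument goes through without further change.
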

\begin{proof}
We restrict time to be in an arbitrary interval $[T_k,T_{k+1}]$ and take the inner product of (\ref{nse1g}) with Aw
\begin{align}\label{abc}
\frac{1}{2}\frac{d}{dt}\|w\|^2 + \nu|Aw|^2 + \mu_{k+1}\|w\|^2  &= (B(w,w),Aw) + (f,Aw) \nn \\
&+ \mu_{k+1}(Q_Kw,Aw) + \mu_{k+1}(P_K u, Aw).
\end{align}
Each term on the right hand side is estimated as before:
\begin{align*}
&|(B(w,w),Aw)|\le c\|w\|^{3/2}|Aw|^{3/2}\le \frac{\nu}{8}|Aw|^2 + \frac{c}{\nu^3}\|w\|^6\\
&|(f,Aw)|\le \frac{1}{\nu}|f|^2 + \frac{\nu}{4}|Aw|^2\\
& \mu_{k+1}|(Q_Kw,Aw)|=\mu_{k+1}\|Q_K w \|^2 \le \frac{ \mu_{k+1}}{\lambda_K} |Aw|^2\le \frac{\nu}{8}|Aw|^2\\
& \mu_{k+1}|(P_K u, Aw)|\le \frac{\mu_{k+1}}{2}\|P_K u\|^2 + \frac{\mu_{k+1}}{2}\| w \|^2.
\end{align*}
Inserting the estimates into (\ref{abc}),
\[
\frac{d}{dt}\| w \|^2 + \left(\mu_{k+1}-\frac{c}{\nu^3}\|w\|^4\right)\|w\|^2 \le \frac{2}{\nu}|f|^2 + \mu_{k+1}\|P_K u\|^2.
\]
Using the lower bound for $\mu_{k+1}$ in (\ref{adaptivemucond}) and defining $\tau=t-T_k$ we obtain by the Gronwall inequality $\forall t\in[T_k,T_{k+1}]$,
\[
\|w(t)\|^2 \le e^{-(\mu_{k+1}/2) \tau}\|w(T_j)\|^2 +
\left(1-  e^{-(\mu_{k+1}/2) \tau}\right)\left(\frac{4|f|^2}{\nu^2\lambda_1} + 2\widetilde{M}_{k+1}^2\right)\le M_{k+1}^2.
\]
\end{proof}
\begin{rem}In order for this iterative construction to work up to time interval $[T_j,T_{j+1}]$ we need to satisfy $M_k^4\lambda_K^{-1}\le \frac{\nu^4}{16c},\ k\in\{0,1,..., j+1\}$. By construction, $M_1^2=\frac{4|f|^2}{\nu^2\lambda_1} + 2\widetilde{M}_{1}^2$ and $M_{j+1}^2\le\max\bigg\{M_j^2, \left(\frac{4|f|^2}{\nu^2\lambda_1} + 2\widetilde{M}_{j+1}^2\right)\bigg\}$ which means such a choice of $\{\mu_1,\mu_2,...,\mu_{j+1}\}$ is possible if we assume (\ref{adaptiveassumption}).
\end{rem}

\begin{thm}
Suppose $I_h=P_K$ is a modal interpolant and satisfies (\ref{type1k}). Assume the same conditions from Theorem \ref{wboundg}.  Then the solutions of (\ref{nse1}) and (\ref{nse1g}) satisfy
\[
|\widetilde{w}(t)|^2\le e^{-(\mu_{k+1}/2)t}|\tw(T_k)|^2\ \forall t\in[T_k,T_{k+1}],\ \forall k\in\{0,1,...,j\}
\]
where $\widetilde{w}=w-u$.
\end{thm}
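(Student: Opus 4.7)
The plan is to repeat, on each subinterval $[T_k,T_{k+1}]$, the argument used in the proof of Theorem \ref{thm:modalconvergence}, leveraging the uniform $H^1$ bound $\|w(t)\|\le M_{k+1}$ provided by Theorem \ref{wboundg} and the lower bound on the nudging parameter $\mu_{k+1}$ from \eqref{adaptivemucond}. Since the adaptive system \eqref{nse1g} is just \eqref{nse1a} restricted to a subinterval with a fresh constant nudging parameter and an initial value inherited from the previous interval, the proof splits naturally interval-by-interval.

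First, I would fix $k\in\{0,1,\ldots,j\}$ and work on $[T_k,T_{k+1}]$. Let $w_N$ denote the Galerkin approximation of $w$ on this subinterval (with initial datum $P_N w(T_k)$) and $u_N$ the Galerkin approximation to the weak solution $u$ as in Theorem \ref{thm:Leray}. Setting $\tw_N=w_N-u_N$ and subtracting the two Galerkin systems, $\tw_N$ satisfies an equation of the same form as in the proof of Theorem \ref{thm:modalconvergence}, namely
\[
\frac{d\tw_N}{dt}+\nu A\tw_N+P_NB(\tw_N,w_N)+P_NB(u_N,\tw_N)=-\mu_{k+1}Q_K(\tw_N)-\mu_{k+1}\tw_N+\mu_{k+1}P_K(u-u_N).
\]

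Next, I would take the $H$-inner product with $\tw_N$ and bound each right-hand term exactly as in Theorem \ref{thm:modalconvergence}: the nonlinear term by \eqref{bilinear4} to yield $\tfrac{c}{\nu^3}\|w_N\|^4|\tw_N|^2+\tfrac{\nu}{2}\|\tw_N\|^2$; the $Q_K$-term absorbed using \eqref{type1k} together with the upper bound $\mu_{k+1}\le \nu\lambda_N/8$ in \eqref{adaptivemucond} (or, more relevantly, using $\lambda_K$ since $Q_K=I-P_K$); and the remainder term by Young's inequality. This produces
\[
\frac{d}{dt}|\tw_N|^2+\Bigl(\mu_{k+1}-\frac{c}{\nu^3}\|w_N\|^4\Bigr)|\tw_N|^2\le \mu_{k+1}|P_K(u-u_N)|^2.
\]
Invoking Theorem \ref{wboundg} to assert $\|w_N\|\le M_{k+1}$ on $[T_k,T_{k+1}]$ (at the Galerkin level, this follows from the proof of that theorem), and using the lower bound $\mu_{k+1}\ge 2cM_{k+1}^4/\nu^3$ from \eqref{adaptivemucond}, the damping coefficient is at least $\mu_{k+1}/2$, so Gronwall on $[T_k,T_{k+1}]$ gives
\[
|\tw_N(t)|^2\le e^{-(\mu_{k+1}/2)(t-T_k)}|\tw_N(T_k)|^2+2\sup_{s\in[T_k,T_{k+1}]}|P_K(u(s)-u_N(s))|^2.
\]

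Finally, I would pass to the limit $N\to\infty$ exactly as in Theorem \ref{thm:modalconvergence}: since $P_K$ has finite rank and $u_N\to u$ in $C([T_k,T_{k+1}];V')$, the supremum term tends to $0$ along a subsequence, while $\tw_N\rightharpoonup \tw$ weakly allows the left-hand side to descend to $|\tw(t)|^2$. The main obstacles I anticipate are mostly bookkeeping rather than genuine analytical difficulties: (i) making sure the $H^1$ bound $\|w_N(T_k)\|\le M_k$ is inherited correctly at each restart so that Theorem \ref{wboundg}'s hypothesis is available on the next interval, which follows from the continuity of $w$ and the definition $w(T_k)=\lim_{t\to T_k^-}w(t)$ in \eqref{nse1g}; and (ii) the fact that the stated exponential decay factor $e^{-(\mu_{k+1}/2)t}$ should read $e^{-(\mu_{k+1}/2)(t-T_k)}$ so that the estimate reduces to equality at $t=T_k$, which is what the Gronwall step naturally produces.
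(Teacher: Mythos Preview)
Your proposal is correct and follows essentially the same route as the paper: work at the Galerkin level on each subinterval, derive the differential inequality for $|\tw_N|^2$ by estimating the nonlinear, $Q_K$-, and remainder terms exactly as in Theorem \ref{thm:modalconvergence}, apply Gronwall, and pass to the limit using $u_N\to u$ in $C([T_k,T_{k+1}];V')$. Your observation in (ii) is also on target: the paper writes $e^{-(\mu_{k+1}/2)t}$ both in the statement and in the Gronwall step, but Gronwall on $[T_k,T_{k+1}]$ naturally yields the factor $e^{-(\mu_{k+1}/2)(t-T_k)}$, which is what is needed for the estimate to be consistent at $t=T_k$.
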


\begin{proof}
Assume $N\ge K$, $t\in[T_k,T_{k+1}]$, and $\widetilde{w}_N$ satisfies the following,
\begin{align*}
& \frac{d\tw_N}{dt} + vA\tw_N + P_N B(w_N,w_N)-P_NB(u_N,u_N)\\
&\qquad  = -\mu_{k+1} P_K(\tw_N) + \mu_{k+1} P_K(u-u_N)
\end{align*}
which can be rearranged to
\begin{align*}
&\frac{d\tw_N}{dt} + vA\tw_N + P_NB(\tw_N,w_N)+P_NB(u_N,\tw_N)\\
&\qquad = -\mu_{k+1} Q_K(\tw_N) -\mu_{k+1} \tw_N + \mu_{k+1} P_K(u-u_N).
\end{align*}
We take the inner product with $\tw_N$
\begin{align}\label{eqn8888}
& \frac{1}{2}\frac{d}{dt}|\tw_N|^2 + \nu\|\tw_N\|^2 + \mu_{k+1} |\tw_N|^2 \nn \\
&\qquad= -(B(\tw_N,w_N),\tw_N) -\mu_{k+1}(Q_K(\tw_N),\tw_N)\nn \\
&\qquad + \mu_{k+1} ( P_K(u-u_N),\tw_N).
\end{align}
We now  estimate each term on the right hand side as follows:
\begin{align*}
&|(B(\tw_N,w_N),\tw_N)|\le c|\tw_N|^{1/2} \|w_N\| \|\tw_N\|^{3/2}\\
&\qquad \le \frac{c}{\nu^3}\|w_N\|^4|\tw_N|^2 + \frac{\nu}{2}\|\tw_N\|^2\\
& \mu_{k+1}|(Q_K(\tw_N),\tw_N)|\le \mu_{k+1} |Q_K(\tw_N)||\tw_N| \\
&\qquad \le \frac{\mu_{k+1}}{\lambda_K^{1/2}}\|\tw_N\| |\tw_N| \le \frac{\mu_{k+1}}{\lambda_K}\|\tw_N\|^2 + \frac{\mu_{k+1}}{4} |\tw_N|^2 \\
&\qquad \le \frac{\nu}{2}\|\tw_N\|^2 + \frac{\mu_{k+1}}{4}|\tw_N|^2 \\
&\mu_{k+1} |(P_K(u-u_N),\tw_N)|\le \mu_{j+1} |P_K(u-u_N)||\tw_N| \\
&\qquad \le \mu_{k+1} |P_K(u-u_N)|^2 + \frac{\mu_{k+1}}{4}|\tw_N|^2.
\end{align*}
Inserting the estimates into \eqref{eqn8888}, 
\[
\frac{d}{dt}|\tw_N|^2 + (\mu_{k+1}-\frac{c}{\nu^3}\|w_N\|^4) |\tw_N|^2 \le \mu_{k+1} |P_K(u-u_N)|^2
\]
and since $\mu$ satifies (\ref{adaptivemucond}),
\[
\frac{d}{dt}|\tw_N|^2 + \frac{\mu_{k+1}}{2} |\tw_N|^2 \le \mu_{k+1} |P_K(u-u_N)|^2 .
\]
By Gronwall inequality,
\[
|\tw_N(t)|^2\le e^{-(\mu_{k+1}/2)t}|\tw(T_k)|^2 + 2\sup_{t\in[T_k,T_{k+1}]}|P_K(u-u_N)|^2.
\]
Recall $u_N\to u$ in $C(0,T;V')$ and $\phi_i$ is the ith eigenvector associated with A. Therefore,
\[
\liminf_{N\to\infty}|P_K(u-u_N)|\le \liminf_{N\to\infty}\sum\limits_{i=1}^{K} |(u-u_N,\phi_i)|=0.
\]
Since $\tw_N$ converges to $\tw$ weakly,
\[
|\tw(t)|^2\le e^{-(\mu_{k+1}/2)t}|\tw(T_k)|^2,\ \forall t\in[T_k,T_{k+1}]
\]
which proves the result.
\end{proof}



\bibliographystyle{amsplain}

\end{document}